\newtheorem{thm}{Theorem}[section]
\newtheorem{lem}[thm]{Lemma}
\theoremstyle{definition}
\theoremstyle{remark}
\newtheorem{rem}{Remark}
\title{Parameter estimation for random sampled Regression Model with Long Memory Noise}
\author[1]{H\'ector Araya }
\author[2]{Natalia Bahamonde}
\author[1]{Lisandro Ferm\'in}
\author[1]{Tania Roa}
\author[1]{Soledad Torres}
\affil[1]{CIMFAV, Facultad de Ingenier\'ia, Universidad de Valpara\'iso}
\affil[2]{Instituto de Estad\'istica, Pontificia Universidad Cat\'olica de Valpara\'iso}
\affil[1]{\textit {hector.araya@postgrado.uv.cl}}
\affil[2]{\textit {natalia.bahamonde@pucv.cl}}
\affil[1]{\textit {lisandro.fermin@uv.cl}}
\affil[1]{\textit {tania.roa@postgrado.uv.cl}}
\affil[1]{\textit {soledad.torres@uv.cl}}
\begin{document}
%\linenumbers %cuenta la lineas
\maketitle
\abstract{In this article, we present the least squares estimator for the drift parameter in a linear regression model driven by the increment of a fractional Brownian motion sampled at random times. For two different random times, Jittered and renewal process sampling, consistency of the estimator is proven. A simulation study is provided to illustrate  the performance of the estimator under different values of the Hurst parameter  $H$.} \\
\textbf{Key words}: fractional Brownian motion, long memory, least squares estimator, random times, regression model.

\section{Introduction}
%\textcolor{red}{POR QU\'E RANDOM TIMES MOTIVACI\'ON}\\
In different research areas, such as finance, network, meteorology, and astronomy among others, it has been noticed that the observations can be carried out sampling with random disturbances. Some  examples  of this  sampling are the  data behavior until it is necessary to increase the sampling frequency, measurements obtained at random times, and defining stopping time when a particular event occurs, etc.  For example, in \cite{nieto2015} the authors studied a Bayesian interpolation of unequally spaced time series. The case of paleoclimate time series was considered in \cite{max2014} and in \cite{olaf2016} where it is possible to estimate the significance of cross-correlations in unevenly sampled astronomical time series. Finally,  in the area of computer  science, we can mention the works given in \cite{chang2014} and \cite{zhao2014}. \\

The study of statistical models in those situations is quite promising and  has some open problems such as statistical inference  and the limit behavior of the estimators. In this article  we propose taking a first step in this direction; to study a simple regression model with Gaussian and long memory  noise, and  observation  measurements at random times. We consider here specific random times along with Jittered and Renewal Process sampling that we define properly in section \ref{Pre}. The term  {\it jitter} is related to the temporal variability during the sending of digital signals or as the small variation in the accuracy of the clock signal, see \cite{bell1981} and the references therein. It has also recently appeared in works related to the analysis of computational images, such as \cite{khan2017}, \cite{krune2016} and \cite{subr2014}. The case  of renewal process represents progressing randomness and distance from periodic sampling, see  \cite{durrett} for more details. 

Vilar et al. have written some previous works in this direction. In \cite{vilar}  the authors studied the nonparametric kernel estimator of the regression function, $m(x) = \mathbb{E}(Y ]X = x)$,  under mixing dependence conditions, and  the Ornstein-Uhlenbeck process driven by Brownian motion was studied in \cite{vilar2000}. 

Also, in \cite{masry1} the author studied the problem of estimating an unknown probability density function on the based on $n$ independent observations sampled at random times.

%
%%Vilar Wu 
%In the context of Kernel type estimators of the density of continuous time  stochastic processes under weak dependence and sampled data we can cite the works of  \cite{Masry} in the case of  independence-uncorrelated- ness conditions and  \cite{Wu} where  the stochastic processes are assumed to satisfy the strong mixing condition and the sampling instants are random.  

Using a wavelet analysis Bardet et al. in  \cite{bardet} studied the case of a nonparametric estimator of the spectral density of a Gaussian process with stationary increments, including the case of fractional Brownian motion,  from the observation of one path at  some particular class of random discrete times. They proved a central limit theorem and provided an application to biological data. Philippe et al. in \cite{PRV} gave one of the last works on this topic, where the authors considered the study of the preservation of memory  in a statistical model.

%\textcolor{red}{tipo de modelo que se quiere estudiar}\\
Our main purpose in studying a model with long memory noise  is the characterization of the strong correlations between observations or  persistence,  by a slow decay of the correlations. To explain this phenomenon in a model, it is common to represent it through the Hurst exponent $ H $, which takes values in $ [0,1] $. In particular, the long-range dependence can be seen when $H \in \left] 1/2, 1 \right] $. Since the work of Mandelbrot et al., \cite{mandelbrotfractional} the effect of long-range dependence has been studied over the years . One of the most popular stochastic processes with long memory  is the  fractional Brownian motion.   The demand of this process is caused by  a nice set of  properties, which are described below. A fractional Brownian motion  $B^H$ is a Gaussian process with  the following covariance structure
\begin{equation}\label{COVFRAC}
R_{H}(t, s) := \mathbb{E} \left[ B_{t}^{H} B_{s}^{H} \right] = \dfrac{\sigma^{2}}{2} \left[ |t|^{2H} + |s|^{2H} - |t-s|^{2H} \right].
\end{equation}

The family of processes $B^H_t : t \in [0, T]$ has  several properties such as:
\begin{enumerate}
\item The covariance of the increments of $B^H$ on intervals decays asymptotically as a negative
power of the distance between intervals.
\item Fractional Brownian motion is the only finite-variance process which is self-similar (with
index H) and has stationary increments.
\end{enumerate}

Those characteristics have converted the fractional Brownian motion into  one of the most natural generalizations of Brownian motion among the probability community.\\

With those motivations in mind, let us proceed to the mathematical description of the
model we are dealing with. Namely, we consider the following  simple regression  model 
\begin{eqnarray}
Y_{\tau_{i}} = a \tau_{i} + \Delta B_{\tau_{i}}^{H} , \; i =0, \dots, N(T),      
\label{modelo}
\end{eqnarray}
where $\Delta B_{\tau_{i+1}}^{H} = B_{\tau_{i+1}}^{H} - B_{\tau_{i}}^{H}$, 
%\begin{equation} \label{no-ran}
%Y_{\tau_{i+1}} = a \tau_{i+1} + \xi_{\tau_{i+1}}, \quad  0 \leq \tau_i \leq T \quad \mbox{and} \quad 1 \leq i \leq N(T),
%\end{equation}
 $T$ is a  positive fixed number, $N(T)$ means  the number of times that remain in the interval $[0, T]$. If $T=1$ we put $N(1):=N$. $\tau := \{ \tau_i , 1 \leq i \}$ are the random times given by Jittered sampling or associated to Renewal processes defined  in section  \ref{Pre}.
It is not hard to prove  that in the case of deterministic times, $\tau_{i} = \frac{Ti}{N}, 1 \leq i \leq N$ the $L^2$-consistency of the least squares estimation for the parameter $a$ is ensured. In fact, the rate of convergence is of order $N^2$.

The primary interest in this work is the parametric estimation and convergence results in the linear regression model, with long memory noise and  observations sampled at random times. It is important to recall that model the process $Y:= \{ Y_{\tau_i}, 1 \leq i \}$ defined by equation (\ref{modelo}) has long range dependence and is non-stationary in the weak sense.\\

Finally, this work is organized as follows: in Section 2 we present the definitions of the random times under which we will work and the model in which we will estimate the  parameter. In Section 3, we use the least squares procedure to obtain the parameter estimation, convergence results as $L^{p}$ and almost surely using jittered sampling and renewal process structure. To conclude, in Section 4, a simulation study is presented to illustrate the performance of both estimators, taking different values of H.

\section{Preliminaries}\label{Pre}
In this section, we introduce the main tools from the stochastic calculus  needed in the sequel. We present the fractional Brownian motion evaluated at two random times that we will consider throughout this work.
 Finally, the linear regression model and the least square estimator are presented.\\

Fractional Brownian motion $B^{H}$ with Hurst parameter $H \in (1/2, 1)$ is a centered Gaussian process with covariance structure is given in (\ref{COVFRAC}).
%\begin{equation}
%\mathbb{E}(B_{t}^{H}B_{s}^{H}) = \dfrac{1}{2} \left( t^{2H} + s^{2H} - \vert t-s \vert^{2H} \right).
%\end{equation}
It is well-known that if $H = 1/2$, then $B^H$ is a standard Brownian motion. Also, the process $B^H$ is not a semimartingale if $H \neq 1/2$. Hence, we cannot apply the classical It\^o calculus to $B^{H}$.\\

Let  $T=1$ and $ \tau = \lbrace  \tau_{i}; i=0,\ldots,N \rbrace$ a strictly increasing sequence of random points over time, where  $N$ is the last integer such that $\tau_{N-1} \leq 1$, which exhibits one of the following two features.

\begin{enumerate}
\item { \bf Jittered sampling}. First, we assume that we observed a certain process at regular times  $\tau$ with period $\delta = 1/N >0$ but contaminated by an additive noise  $\nu$ which represents possible measurement errors. Then the sequence of random times $\tau_i, \quad 1 \le i \le N$ satisfies
\begin{eqnarray} \label{io}
\tau_{i} = \dfrac{i}{N} + \nu_{i}, \quad i = 1, \dots, N \; and\; \tau_0 := 0,
\end{eqnarray}
where $\lbrace \nu_{i}; \quad 1 \le i \le N \rbrace$   are independent and identically distributed set of random variables with density function $g(t)$, which is assumed to be symmetric. In the rest of the paper, it is assumed  that  $\nu_i \sim U \left[ -\frac{1}{2N}, \frac{1}{2N} \right]$ for all $i= 1,\ldots , N$, where $U[a,b]$ corresponds to the Uniform random variable on the interval $[a , b]$. 

\item { \bf Renewal process}. In this case, the sequence $\tau$ satisfies the renewal property, i.e.
\begin{eqnarray}
\tau_{i} = \sum_{j=1}^{i} t_j   \    \    \   \  i=1,2,... \    \    \  \mbox{and} \, \tau_{0} := 0,
\label{rp}
\end{eqnarray}
where $  \lbrace t_j  , 1 \le j \rbrace$  is a sequence of independent and identically distributed
random variables, with a common distribution function  $G(t)$ with support in $[0,\infty)$. Through the rest of this work it is assumed  that  $G(t)$ is an exponential distribution with parameter $N$ (number of observations). We will use the fact that an exponential distribution with parameter $N$ is equivalent to a gamma distribution $Gamma(1,N)$ and the sum of independent exponential random variables is a gamma random variable.
\end{enumerate} 

%\begin{rem}
%\textcolor{red}{We will assume that in both cases $\tau_N =1$.} 
%\end{rem}

 Let us consider the random sampled linear Regression Model with Long Memory Noise defined in (\ref{modelo})
%\begin{eqnarray}
%Y_{\tau_{i+1}} = a \tau_{i+1} + \Delta B_{\tau_{i+1}}^{H} , \; i =0, \dots ,N-1 ,      
%\label{modelo}
%\end{eqnarray}
%where $\Delta B_{\tau_{i+1}}^{H} = B_{\tau_{i+1}}^{H} - B_{\tau_{i}}^{H}$, $\tau$ is a type of random time (Jittered sampling or Renewal process) and $B^H$ is fractional Brownian motion (fBm) with $H \in (1/2, 1)$. The unknown drift parameter is denoted by $a \in \mathbb{R}$. 
For the estimation of the parameter of interest in model the (\ref{modelo}), the least squares estimator  is computed and is determined by 

\begin{equation}\label{a-jt}
\hat{a}_{N} = \dfrac{\sum_{i=0}^{N-1} \tau_{i+1} Y_{\tau_{i+1}}}{\sum_{i=0}^{N-1} \tau_{i+1}^2}.
\end{equation}

%By (\ref{modelo}) and  (\ref{a-jt}), we have 
%
%\begin{equation} \label{an-a2}
%\hat{a}_{N} - a  =  \dfrac{\dfrac{1}{N} \sum_{i=0}^{N-1} \tau_{i+1} \Delta B_{\tau_{i+1}}^H}{\dfrac{1}{N} \sum_{i=0}^{N-1} \tau_{i+1}^2} .
%\end{equation}

%%%%%%%%%
%%%%%%%%%%
%%%%%%%%
\section{Main results}\label{main}
In this section, we provide our main results. First, we study the parameter estimation for  the random sampled linear Regression Model (\ref{modelo}) with random times  given by Jittered sampling. We prove  that $\hat{a}_{N}$ is an unbiased, $L^{p}$ - consistent  estimator for $a$. The same is proven in the case of renewal type observations. 
To study the asymptotic behavior of (\ref{DN}), we will separately analyse the numerator and the denominator.

\begin{rem}
It is worth mentioning that all the results  in this article can be extended to a noise  with the same covariance  structure as the fractional Brownian motion, such as Rosenblatt, Hermite  and fractional Poisson process. 

\end{rem}

\begin{thm} \label{as-io-rp}
Let  $\tau$ be given by  (\ref{io}) or (\ref{rp}). Then, the LS estimator $\hat{a}_N $ given in (\ref{a-jt}) of the drift parameter $a$ in the model (\ref{modelo}) is strongly consistent, that is
\begin{eqnarray*}
\hat{a}_N  \xrightarrow[N \to \infty]{a.s.}  a.
\end{eqnarray*}
\end{thm}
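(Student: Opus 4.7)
The plan is to write the estimation error as the ratio
\[
\hat{a}_N - a = \frac{N_N}{D_N}, \qquad N_N := \sum_{i=0}^{N-1}\tau_{i+1}\Delta B_{\tau_{i+1}}^{H}, \qquad D_N := \sum_{i=0}^{N-1}\tau_{i+1}^{2},
\]
and to show separately that $D_N/N$ converges almost surely to a positive constant while $N_N$ grows at a strictly slower rate than $D_N$. The ratio then tends to zero almost surely.

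For the denominator the analysis reduces to showing that $\tau_i\approx i/N$ in a strong enough sense. In the jittered case $|\tau_i - i/N|\le 1/(2N)$ surely, so a direct comparison with a Riemann sum yields $D_N/N\to 1/3$ almost surely. In the renewal case with exponential waiting times of rate $N$, the strong law of large numbers applied to the i.i.d.\ increments $t_j$ gives $N\tau_{\lfloor sN\rfloor}\to s$ uniformly in $s\in[0,1]$ a.s., and a Riemann-sum identification again produces $D_N/N\to 1/3$ a.s.

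For the numerator I would condition on $\mathcal{F}^{\tau}=\sigma(\tau_i:i\ge 0)$. Since $B^{H}$ is independent of $\tau$, the variable $N_N$ is conditionally centered Gaussian with variance
\[
V_N=\sum_{i,j=0}^{N-1}\tau_{i+1}\tau_{j+1}\,c_{H}(\tau_i,\tau_{i+1},\tau_j,\tau_{j+1}),
\]
where $c_{H}(s,t,u,v)=\tfrac{1}{2}[|t-u|^{2H}-|s-u|^{2H}-|t-v|^{2H}+|s-v|^{2H}]$ is the fBm increment covariance read off from (\ref{COVFRAC}). Splitting the double sum into its diagonal ($i=j$) and off-diagonal contributions, using $\tau_{i+1}\le 1+o(1)$, typical spacings of order $1/N$, and the standard polynomial bound $c_{H}=O(N^{-2H}|i-j|^{2H-2})$ in the off-diagonal regime, one expects $\mathbb{E}[V_N^{p}]$ to be bounded uniformly in $N$ for every $p\ge 1$. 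Conditional Gaussianity then upgrades this to $\mathbb{E}|N_N|^{2p}\le C_p$, and combined with $D_N\sim N/3$ one gets polynomial-in-$N$ decay of $\mathbb{E}|\hat{a}_N - a|^{2p}$; Markov's inequality together with the Borel--Cantelli lemma closes the argument.

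The main difficulty I anticipate is the control of $V_N$: because fBm has long-range dependence, the off-diagonal covariances decay only polynomially, so the double sum contains many non-negligible cross-terms that must be bounded uniformly in the realisation of the random sampling. In the renewal case there is the additional wrinkle that exponential spacings can be atypically small, which a priori could inflate individual covariances; this will require concentration or moment bounds for the gamma-distributed partial sums $\tau_i$ and for the spacings $t_j$. Once $V_N$ is tamed, the denominator asymptotics and the Borel--Cantelli conclusion are routine.
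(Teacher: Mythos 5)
Your skeleton---write $\hat a_N-a=N_N/D_N$, show $D_N/N\to 1/3$ a.s., split the conditional variance of the numerator into diagonal, adjacent and well-separated index pairs, bound the off-diagonal covariances, and finish with Chebyshev plus Borel--Cantelli---is exactly the paper's route, so the proposal is correct in outline. Two details need repair. First, in the renewal case the waiting times form a \emph{triangular array}: their rate is $N$, so each sample size uses a fresh family $t_1,\dots,t_N$, and the classical SLLN does not apply as stated. The paper instead writes $\frac1N\sum_i\tau_{i+1}^2$ as a deterministic term plus centred weighted sums of the i.i.d.\ increments and invokes an a.s.\ convergence theorem for weighted sums (Choi--Sung) together with explicit variance bounds and Borel--Cantelli; your "uniform SLLN" step must be replaced by something of that kind. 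Second, the higher-moment program ($\mathbb{E}[V_N^p]\le C_p$ for all $p$, then Gaussian hypercontractivity) is unnecessary and harder than what is needed: $p=1$ suffices, since $\mathbb{E}[N_N^2]=\mathbb{E}[V_N]=O(1)$ (equivalently $\mathbb{E}[(N_N/N)^2]=O(N^{-(1+2H)})+O(N^{-2})$) already gives $\mathbb{P}(|N_N|>\epsilon N)\le C/(\epsilon^2N^2)$, which is summable; this is all the paper uses.

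Where you genuinely diverge from the paper---to your advantage---is the well-separated off-diagonal term under renewal sampling. Your bound $c_H=O(N^{-2H}|i-j|^{2H-2})$ comes from a second-order Taylor expansion of $x^{2H}$ and preserves the cancellation in the second difference of the covariance; the paper does this only for jittered sampling. For renewal sampling the paper discards the cancellation and majorizes $|c_H|$ by a sum of $2H$-th powers of time differences, one of which is $|\tau_i-\tau_{j+1}|^{2H}$ with $\tau_i-\tau_{j+1}\sim\mathrm{Gamma}(i-j-1,N)$; that term is of order $((i-j)/N)^{2H}$, which is \emph{not} small for well-separated indices, and the corresponding double sum is $O(1)$ rather than $o(1)$, so the paper's claimed bound for that piece does not follow from the displayed inequality. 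Your plan, which keeps the cancellation and controls the fluctuations of the gamma-distributed gaps by moment or concentration bounds, is the one that actually closes this case; as you anticipate, making the $|i-j|^{2H-2}$ estimate uniform over atypically small exponential spacings is the genuine remaining work.
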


\begin{proof}
%%%%%resultados para IO
Recall that, from (\ref{modelo}) and (\ref{a-jt}) we have 
\begin{equation}\label{DN} 
\hat{a}_{N} - a  =  \dfrac{\dfrac{1}{N} \sum_{i=0}^{N-1} \tau_{i+1} \Delta B_{\tau_{i+1}}^H}{\dfrac{1}{N} \sum_{i=0}^{N-1} \tau_{i+1}^2} := \dfrac{A_N}{D_N}.
\end{equation} 
To prove our main theorem we need an auxiliary lemma relating with the almost surely convergence to the denominator $D_N$.  The proof of this lemma is given in Appendix \ref{AL}.

\begin{lem} \label{lem1}
Let $D_N$ be defined in (\ref{DN}). If $\tau_i $ are the sampling random times defined in  (\ref{io}) or (\ref{rp}), then 

\begin{equation}
D_{N} \xrightarrow[N \to \infty]{a.s.} \frac{1}{3}.
\end{equation}

\end{lem}

Hence, by Lemma \ref{lem1}, it remains to study the asymptotic behavior  of  $A_N$  as  $N \rightarrow \infty$.  It is quite easy to see by the definition of $A_N$ and conditioning on $\tau$, that $\mathbb{E} \left[ A_{N} \right]  = 0$. \\
Let us compute $\mathbb{E} \left[ A_{N} ^2 \right] $.    
\begin{eqnarray}\label{3c}
\mathbb{E} \left[ A_{N} ^2 \right] &= & \mathbb{E} \left[ \dfrac{1}{N^2}  \sum_{i=0}^{N-1} \tau_{i+1}^2 \left( B_{\tau_{i+1}}^{H} - B_{\tau_{i}}^{H}  \right)^{2} \right] \nonumber  \\
%%%%%%%%%%%%%%%%%%%%%%%%%%%%%%%%%%%%%%%%
&+& \mathbb{E} \left[ \dfrac{1}{N^{2}} \sum_{i,j=1, \dots, N; |i-j|=1} \tau_{i+1} \tau_{j+1} \left( B_{\tau_{i+1}}^{H} - B_{\tau_{i}}^{H} \right) \left( B_{\tau_{j+1}}^{H} - B_{\tau_{j}}^{H} \right) \right] \nonumber  \\
%%%%%%%%%%%%%%%%%%%%%
&+& \mathbb{E} \left[ \dfrac{1}{N^{2}} \sum_{i,j=1, \dots, N; |i-j|\geq 2} \tau_{i+1} \tau_{j+1} \left( B_{\tau_{i+1}}^{H} - B_{\tau_{i}}^{H} \right) \left( B_{\tau_{j+1}}^{H} - B_{\tau_{j}}^{H} \right) \right] \nonumber  \\
&:=& \mathbb{E}( A_N^{(1)}) + \mathbb{E} (A_N^{(2)}) + \mathbb{E} (A_N^{(3)}), \label{sums} %%%%%%%%%%%%%%%%%%%%%
\end{eqnarray}
where we split  the sum into three terms associated to the  distance  of the indexes.

%Where $\mathbb{E}( A_N^{(1)})$ corresponds to distance $0$ between $i$ and $j$, $\mathbb{E} (A_N^{(2)})$  to  distance $1$ and $\mathbb{E} (A_N^{(3)})$  to distance $2$. 
%We divide the study in three cases:\\    
{\bf Jittered sampling case}\\
Let us study the  first term in (\ref{3c}) in the case of the Jittered sampling times defined in  (\ref{io}).
\begin{eqnarray}
 \mathbb{E}( A_N^{(1)}) &=& \dfrac{1}{N^2} \left. \sum_{i=0}^{N-1} \mathbb{E} \left[ \mathbb{E} \left[ \left( \dfrac{i+1}{N} + \nu_{i+1} \right)^2 \left( B_{ \frac{i+1}{N} + \nu_{i+1} }^{H} - B_{\frac{i}{N} + \nu_{i}}^{H} \right)^{2} \right| \nu_{i} = s_{i} \, , \nu_{i+1} = s_{i+1} \right] \right] \nonumber \\
%%%%%%%%%%%%%%%%%%%%%%
&=& \dfrac{1}{N^2} \sum_{i=0}^{N-1} \int_{-\frac{1}{2N}}^{\frac{1}{2N}} \int_{-\frac{1}{2N}}^{\frac{1}{2N}} \left( \dfrac{i+1}{N} + s_{i+1} \right)^2 \mathbb{E} \left[ \left( B_{\frac{i+1}{N} + s_{i+1} }^{H} - B_{ \frac{i}{N} + s_{i} }^{H} \right)^2 \right]  f_{\nu_{i}, \nu_{i+1}} (s_{i}, s_{i+1}) d s_{i} d s_{i+1} \nonumber \\
%%%%%%%%%%%%%%%%%%%%%
&=& \dfrac{1}{N^2} \sum_{i=0}^{N-1} \int_{-\frac{1}{2N}}^{\frac{1}{2N}} \int_{-\frac{1}{2N}}^{\frac{1}{2N}} \left( \dfrac{i+1}{N} + s_{i+1} \right)^2 \left( \dfrac{i+1}{N} + s_{i+1} - \dfrac{i}{N} - s_{i} \right)^{2H} N^2 d s_{i} d s_{i+1} \nonumber \\
%%%%%%%%%%%%%%%%%%%%%%%%%%%
&=& \dfrac{1}{N^2} \sum_{i=0}^{N-1} \int_{-\frac{1}{2N}}^{\frac{1}{2N}} \int_{-\frac{1}{2N}}^{\frac{1}{2N}} \left( \dfrac{i+1}{N} + s_{i+1} \right)^2 \left( \dfrac{1}{N} + s_{i+1} - s_{i} \right)^{2H} N^2 d s_{i} d s_{i+1} , \label{AJ}
\end{eqnarray}
where $f_{\nu_{i}, \nu_{i+1}} (s_{i}, s_{i+1})$ is the joint distribution of the  couple $(\nu_i, \nu_{i+1})$ which is the product of two independent Uniform$[-1/2N , 1/2N] $ random variables.    Since $s_{i} \in [-1/2N, 1/2N ]$ we obtain
\begin{eqnarray}\label{A1-1}
\mathbb{E}( A_N^{(1)}) &\leq& \dfrac{1}{N^2} \sum_{i=0}^{N-1} \int_{-\frac{1}{2N}}^{\frac{1}{2N}} \int_{-\frac{1}{2N}}^{\frac{1}{2N}} \left( \dfrac{i+1}{N} + \dfrac{1}{2N} \right)^2 \left( \dfrac{1}{N} + \dfrac{1}{N} \right)^{2H} N^2 d s_{i} d s_{i+1} \nonumber \\
& \leq & \dfrac{2^{2H-2}}{N^{4+2H}}  (N+1)(N+2)(2N+3) \le \frac{C_1(H)}{N^{1+2H}} .
\end{eqnarray}
 Let us consider the case  of $|i-j|=1$ in (\ref{sums}). For simplicity we will take  $j=i-1$ the other case can be treated in a similar way. Therefore 
%%%%%% Queda 2 veces las sumatoria porque son 2 subdiagonales tales que |i-j|=1
\begin{align}\label{A2}
\mathbb{E}(A_N^{(2)} )&= \frac{2}{N^{2}} \sum_{i=1}^{N-1} \mathbb{E} \left[ \tau_{i+1} \tau_{i} \left( B_{\tau_{i+1}}^{H} - B_{\tau_{i}}^{H} \right) \left( B_{\tau_{i}}^{H} - B_{\tau_{i-1}}^{H} \right) \right] \nonumber \\
%%%%%%%%%%%%%%%%%%%%%%%%%%%
%&= \frac{2}{N^{2}} \sum_{i=0}^{N-2} \mathbb{E} \left[ \mathbb{E} \left[ \left( \frac{i+1}{N} + \nu_{i+1} \right) \left( \frac{i}{N} + \nu_{i} \right) \right. \right. \\
%& \times \left. \left. \left. \left( B_{\left( \frac{i+1}{N} + \nu_{i+1} \right)}^{H} - B_{\left( \frac{i}{N} + \nu_{i} \right)}^{H} \right) \left( B_{\left( \frac{i}{N} + \nu_{i} \right)}^{H} - B_{\left( \frac{i-1}{N} + \nu_{i-1} \right)}^{H} \right) \right| \nu_{i-1} =s_{i-1} , \nu_{i} =s_{i}, \nu_{i+1} =s_{i+1} \right] \right] \\
%%%%%%%%%%%%%%%%%%%%%%%
&= \frac{2}{N^{2}} \sum_{i=1}^{N-1} \int_{-\frac{1}{2N}}^{\frac{1}{2N}} \int_{-\frac{1}{2N}}^{\frac{1}{2N}} \int_{-\frac{1}{2N}}^{\frac{1}{2N}} \left( \frac{i+1}{N} + s_{i+1} \right) \left( \frac{i}{N} + s_{i} \right)  \nonumber\\
& \times \mathbb{E} \left[ \left( B_{\frac{i+1}{N} + s_{i+1}}^{H} - B_{\frac{i}{N} + s_{i}}^{H} \right) \left( B_{\frac{i}{N} + s_{i}}^{H} - B_{\frac{i-1}{N} + s_{i-1}}^{H} \right) \right] N^{3} ds_{i-1} ds_{i} ds_{i+1},
\end{align}
%%%%%
where we conditioned with respect to $\nu_{i-1}=s_{i-1}$, $\nu_{i}=s_{i}$ and $\nu_{i+1}=s_{i+1}$. 
Since, 
%%%%%%%
\begin{align*}
& \mathbb{E} \left[ \left( B_{\frac{i+1}{N} + s_{i+1}}^{H} - B_{\frac{i}{N} + s_{i}}^{H} \right)  \left( B_{\frac{i}{N} + s_{i}}^{H} - B_{\frac{i-1}{N} + s_{i-1}}^{H} \right) \right] \\
%%%%%%%%%%%%%%%%%%%%%%%%%
&= \frac{1}{2} \left[ \left| s_{i+1} - s_{i-1} + \frac{2}{N} \right|^{2H} - \left| s_{i+1} - s_{i} + \frac{1}{N} \right| ^{2H} - \left| s_{i} - s_{i-1} + \frac{1}{N} \right|^{2H} \right].
\end{align*}
%%%%%%
 Then
\begin{align}\label{ICov}
\mathbb{E} \left[ \left( B_{\frac{i+1}{N} + s_{i+1}}^{H} - B_{\frac{i}{N} + s_{i}}^{H} \right) \left( B_{\frac{i}{N} + s_{i}}^{H} - B_{\frac{i-1}{N} + s_{i-1}}^{H} \right) \right] & \leq  \frac{1}{2}   \left({\frac{3}{N}}\right)^{2H}  \le \frac{C_2(H)} {2 N^{2H}},  \text{with}  \ C_2(H)= 3^{2H}.
\end{align}
Now, plugging inequality (\ref{ICov}) into the equation (\ref{A2}) yields
\begin{align}\label{A2-1}
\mathbb{E}(A_N^{(2)}) & \leq \frac{2}{N^{2}} \sum_{i=1}^{N-1} \int_{-\frac{1}{2N}}^{\frac{1}{2N}} \int_{-\frac{1}{2N}}^{\frac{1}{2N}} \int_{-\frac{1}{2N}}^{\frac{1}{2N}} \left( \frac{i+1}{N} + s_{i+1} \right) \left( \frac{i}{N} + s_{i} \right) \frac{C_2(H)}{2 N^{2H}} N^{3} ds_{i-1} ds_{i} ds_{i+1} \nonumber\\
%%%%%%%%%%%%%%%%%
& \leq \frac{C_2(H)}{ N^{2+2H}} \sum_{i=1}^{N-1}  \int_{-\frac{1}{2N}}^{\frac{1}{2N}} \int_{-\frac{1}{2N}}^{\frac{1}{2N}} \int_{-\frac{1}{2N}}^{\frac{1}{2N}} \left( \frac{i+1}{N} + \frac{1}{2N} \right) \left( \frac{i}{N} + \frac{1}{2N} \right) N^{3} ds_{i-1} ds_{i} ds_{i+1} \nonumber\\
&= \frac{C_2(H)}{ N^{2+2H}} \sum_{i=1}^{N-1} \left( \frac{i+1}{N} + \frac{1}{2N} \right) \left( \frac{i}{N} + \frac{1}{2N} \right) = \frac{C_2(H)}{ N^{2+2H}} \sum_{i=0}^{N-2} \left( \frac{2i + 3}{2N} \right)^{2} \nonumber\\
&\le \frac{C_2(H)}{ 6 N^{2H+4}}  \left(N(N+1)(2N  +1) \right) \le  \frac{C_3(H)}{ N^{2H+1}} .   
\end{align}
Let us consider the case  $|i-j|=2$ in (\ref{sums}). Conditioning on  $\nu_{i}=s_{i} , \nu_{i+1}=s_{i+1}, \nu_{j}=s_{j}$  and  $\nu_{j+1}=s_{j+1}$, we get   
\begin{align}\label{A3}
\mathbb{E}(A_N^{(3)}) &= \frac{1}{N^{2}} \mathbb{E} \left[ \sum_{i,j=1, \dots, N; |i-j| \geq 2} \tau_{i+1} \tau_{j+1} \left( B_{\tau_{i+1}}^{H} - B_{\tau_{i}}^{H} \right) \left( B_{\tau_{j+1}}^{H} - B_{\tau_{j}}^{H} \right) \right] \nonumber \\
%%%%%%%%%%%%%%%%%%%%%%%%%%%%%%
%&= \frac{1}{N^{2}} \sum_{i,j=1, \dots, N; |i-j| \geq 2} \mathbb{E} \left[  \mathbb{E} \left[ \left( \frac{i+1}{N} + \nu_{i+1} \right) \left( \frac{j+1}{N} + \nu_{j+1} \right) \left( B_{\left( \frac{i+1}{N} + \nu_{i+1} \right)}^{H} - B_{\left( \frac{i}{N} + \nu_{i} \right)}^{H} \right) \right. \right. \\
%& \left. \left. \left.  \times \left( B_{\left( \frac{j+1}{N} + \nu_{j+1} \right)}^{H} - B_{\left( \frac{j}{N} + \nu_{j} \right)}^{H} \right) \right| \nu_{i}=s_{i} , \nu_{i+1}=s_{i+1}, \nu_{j}=s_{j} , \nu_{j+1}=s_{j+1} \right] \right]  \\
%%%%%%%%%%%%%%%%%%%%%%%%%
&= \frac{1}{N^{2}} \sum_{i,j=1, \dots, N; |i-j| \geq 2} \int_{-\frac{1}{2N}}^{\frac{1}{2N}} \int_{-\frac{1}{2N}}^{\frac{1}{2N}} \int_{-\frac{1}{2N}}^{\frac{1}{2N}} \int_{-\frac{1}{2N}}^{\frac{1}{2N}} \left( \frac{i+1}{N} + s_{i+1} \right) \left( \frac{j+1}{N} + s_{j+1} \right)  \nonumber \\
& \times \mathbb{E} \left[ \left( B_{ \frac{i+1}{N} + s_{i+1} }^{H} - B_{\frac{i}{N} + s_{i} }^{H} \right)  \left(B_{ \frac{j+1}{N} + s_{j+1} }^{H} -  B_{ \frac{j}{N} + s_{j} }^{H} \right) \right] 
N^{4} ds_{i} ds_{i+1} ds_{j} ds_{j+1}.
\end{align}
Let 
\begin{align*}
& {\bf I}:=\mathbb{E} \left[ \left( B_{ \frac{i+1}{N} + s_{i+1}}^{H} - B_{ \frac{i}{N} + s_{i})}^{H} \right) \left( B_{ \frac{j+1}{N} + s_{j+1} }^{H} - B_{ \frac{j}{N} + s_{j} }^{H} \right) \right] \\
&= \frac{1}{2} \left[ \left| \frac{i-j+1}{N} + s_{i+1} - s_{j} \right| ^{2H}  + \left| \frac{i-j-1}{N} + s_{i} - s_{j+1} \right| ^{2H} \right. \\
& -\left. \left| \frac{i-j}{N} + s_{i+1} - s_{j+1} \right| ^{2H} - \left| \frac{i-j}{N} + s_{i} - s_{j} \right| ^{2H} \right].
\end{align*}
%%%%%resultados para RP
By Taylor theorem applied to  the function $x^{2H}$ allows to get  
\begin{equation*}
\left\vert \frac{i-j+1}{N} + s_{i+1} - s_{j} \right\vert ^{2H} -  \left\vert \frac{i-j}{N} + s_{i} - s_{j} \right\vert^{2H} = 2H  \left\vert \frac{i-j}{N} + s_{i} - s_{j} \right\vert^{2H-1} \left( s_{i+1}-s_{i} + \dfrac{1}{N} \right) + R^{1}_{N}
\end{equation*} 
and 
\begin{equation*}
\left\vert \frac{i-j}{N} + s_{i+1} - s_{j+1} \right\vert ^{2H} -  \left\vert \frac{i-j-1}{N} + s_{i} - s_{j+1} \right\vert^{2H} = 2H  \left\vert \frac{i-j-1}{N} + s_{i} - s_{j+1} \right\vert^{2H-1} \left( s_{i+1}-s_{i} + \dfrac{1}{N} \right) + R^{2}_{N}.
\end{equation*} 
Therefore 
\begin{equation*}
{\bf I}= H \left( s_{i+1}-s_{i} + \dfrac{1}{N} \right) \left[  \left\vert \frac{i-j}{N} + s_{i} - s_{j} \right\vert^{2H-1} - \left\vert \frac{i-j-1}{N} + s_{i} - s_{j+1} \right\vert^{2H-1}  \right] +  R^{1}_{N} - R^{2}_{N}.
\end{equation*}
Again, applying Taylor theorem to the function $x^{2H -1}$, we obtain 
%%%%%%%%%%%%%%%
\begin{equation*}
{\bf I} = H (2H-1) \left( s_{i+1}-s_{i} + \dfrac{1}{N} \right) \left( s_{j+1}-s_{j} + \dfrac{1}{N} \right)   \left\vert \frac{i-j-1}{N} + s_{i} - s_{j+1} \right\vert^{2H-2} + R^{3}_{N} +  R^{1}_{N} - R^{2}_{N},
\end{equation*}
which implies 
\begin{equation}\label{I-a}
{\bf I} \leq  \dfrac{C_4(H)}{N^2}  \left\vert \frac{i-j-1}{N} + s_{i} - s_{j+1} \right\vert^{2H-2}. 
\end{equation}

Notice here that the remainder terms  $R^{1}_{N} , R^{2}_{N}$ and  $R^{3}_{N}$ is of order $N^{-2H }$.
Plugging (\ref{I-a})  into  the expression (\ref{A3}) we obtain
\begin{eqnarray}\label{A3-1}
\mathbb{E}(A_{N}^{(3)})  &\leq & \frac{C_4(H)}{N^{4}} \sum_{i,j=1, \dots, N; |i-j| \geq 2} \left( \frac{i+1}{N} + s_{i+1} \right) \left( \frac{j+1}{N} + s_{j+1} \right) \left\vert \dfrac{i-j-1}{N} + s_{i} -s_{j+1}  \right\vert^{2H-2} \nonumber \\
&\leq & \frac{C_4(H)}{N^{2}}  \frac{1}{N^2} \sum_{i,j=1, \dots, N; |i-j| \geq 2} \left( \frac{i+2}{N}  \right) \left( \frac{j+2}{N}  \right) \left( \dfrac{i-j}{N}  \right)^{2H-2}.
\end{eqnarray}
Moreover, we can see the expression in (\ref{A3-1}) 
\begin{equation*}
\frac{1}{N^{2}} \sum_{i,j=1, \dots, N; |i-j| \geq 2} \left( \frac{i+2}{N}  \right) \left( \frac{j+2}{N}  \right)\left( \dfrac{i-j}{N}  \right)^{2H-2},
\end{equation*}
as a  Riemann sum of the double integral  $\int_0^1 \int_0^1 x y (x-y)^{2H -2} dxdy$ which is finite.  Then 
\begin{equation}\label{A3-2}
\mathbb{E}(A_{N}^{(3)})  \leq  \frac{C_5(H)}{N^{2}}.
\end{equation}

Finally,  substituting (\ref{A1-1})    (\ref{A2-1})  (\ref{A3-2})  into  the equation in  (\ref{sums})  we obtain 

\begin{eqnarray*}
\mathbb{E} \left[  A_{N} ^2 \right] &\le & \frac{C_1(H) + C_3(H)}{N^{1+2H}}  +   \frac{C_5(H)}{N^{2}}.
\end{eqnarray*}

%%%%%%%%%%%%%%%\le \frac{C_1(H)}{N^{1+2H}} .
Since the $L^2$ rate of $A_{N}$ is faster than $1/N$. A direct application of Borell-Cantelli lemma allow us to obtain
\begin{equation}
A_N  \xrightarrow[N \to \infty]{a.s.}  0.
\end{equation}
%%%%%resultados para IO

\vspace{1cm}

\noindent {\bf Renewal sampling case}\\
%%%%%resultados para RP
Let us consider now the case where $\tau_{i} $ are the sampling times defined from (\ref{rp}). In this case is easy to see that  
$\mathbb{E} \left[ A_{N} \right] =0$.
%\begin{eqnarray*}
%\mathbb{E} \left[ A_{N} \right] 
%& = & \dfrac{1}{N} \sum_{i=0}^{N-1} \mathbb{E} \left[ \tau_{i+1} \left( B_{\tau_{i+1}}^{H} - B_{\tau_{i}}^{H} \right) \right] \\
%&=& \left. \dfrac{1}{N} \sum_{i=0}^{N-1} \mathbb{E} \left[ \mathbb{E} \left[ \tau_{i+1} \left( B_{\tau_{i+1}}^{H} - B_{\tau_{i}}^{H} \right) \right| \tau_{i} = z_{i}, \tau_{i+1} = z_{i+1} \right] \right] \\
%&=& \dfrac{1}{N} \sum_{i=0}^{N-1} \int_{0}^{\infty} \int_{0}^{z_{i+1}} z_{i+1} \mathbb{E} \left[ B_{z_{i+1}}^{H} - B_{z_{i}}^{H} \right] f_{\tau_{i}, \tau_{i+1}}(z_{i}, z_{i+1}) d z_{i} d z_{i+1} \\
%%%%%%%%%%%%%%%%%%%%%%%
%&=& 0.
%\end{eqnarray*}
Let us compute now the second moment of $A_{N}$.
\begin{eqnarray}\label{Desc}
\mathbb{E} \left[ A_{N}^2 \right] & = & \dfrac{1}{N^2} \mathbb{E} \left[ \left( \sum_{i=0}^{N-1} \tau_{i+1} \left( B_{\tau_{i+1}}^{H} - B_{\tau_{i}}^{H} \right) \right)^{2} \right]  \nonumber \\
%%%%%%%%%%%%%%%%%%%%%%%
&=&\dfrac{1}{N^2} \mathbb{E} \left[  \sum_{i=0}^{N-1} \tau_{i+1}^{2} \left( B_{\tau_{i+1}}^{H} - B_{\tau_{i}}^{H} \right)^{2} \right] \nonumber \\
& + & \dfrac{1}{N^2} \mathbb{E} \left[ \sum_{i,j=1, \dots, N ; |i-j|=1} \tau_{i+1} \tau_{j+1} \left( B_{\tau_{i+1}}^{H} - B_{\tau_{i}}^{H} \right) \left( B_{\tau_{j+1}}^{H} - B_{\tau_{j}}^{H} \right) \right]  \nonumber \\
& + & \dfrac{1}{N^2} \mathbb{E} \left[ \sum_{i,j=1, \dots, N ; |i-j| \geq 2} \tau_{i+1} \tau_{j+1} \left( B_{\tau_{i+1}}^{H} - B_{\tau_{i}}^{H} \right) \left( B_{\tau_{j+1}}^{H} - B_{\tau_{j}}^{H} \right) \right]  \nonumber  \\
&=& A_{N}^{(1)} + A_{N}^{(2)} + A_{N}^{(3)},
\end{eqnarray}
where we split in three terms  as in the case of  jittered random times. \\
First, let us analyse $A_N^{(1)}$.
\begin{eqnarray*}
A_{N}^{(1)} &=& \left. \dfrac{1}{N^2} \sum_{i=0}^{N-1} \mathbb{E} \left[ \mathbb{E} \left[ \tau_{i+1}^{2} \left( B_{\tau_{i+1}}^{H} - B_{\tau_{i}}^{H} \right)^{2} \right| \tau_{i} = z_{i} , \tau_{i+1} = z_{i+1} \right] \right] \\
%%%%%%%%%%%%%%%%%%%
&=& \dfrac{1}{N^2} \sum_{i=0}^{N-1} \int_{0}^{\infty} \int_{0}^{z_{i+1}} z_{i+1}^{2} \mathbb{E} \left[ \left( B_{z_{i+1}}^{H} - B_{z_{i}}^{H} \right)^{2} \right] f_{\tau_{i}, \tau_{i+1}} (z_{i}, z_{i+1}) d z_{i} d z_{i+1} .
\end{eqnarray*}
According to  the  probability density function for the 2-dimensional random vector $(\tau_ i,\tau_{i+1})$ given in  Appendix  \ref{A1}, we have 
\begin{align}\label{I}
A_N^{(1)} &= \dfrac{1}{N^2} \sum_{i=0}^{N-1} \int_{0}^{\infty} \int_{0}^{z_{i+1}} z_{i+1}^{2} \left( z_{i+1} - z_{i} \right)^{2H} \dfrac{N^{i+1}}{\Gamma(i)} z_{i}^{i-1} e^{-N z_{i+1}} d z_{i} d z_{i+1} \nonumber \\
%%%%%%%%%%%%%%%%%%%%%%
%&= \dfrac{1}{N^2} \sum_{i=0}^{N-1} \dfrac{N^{i+1}}{\Gamma(i)} \int_{0}^{\infty} z_{i+1}^{2} e^{-N z_{i+1}} \int_{0}^{z_{i+1}} (z_{i+1} - z_{i})^{2H} z_{i}^{i-1} dz_{i} dz_{i+1} \\
%%%%%%%%%%%%%%%%%%%%%
%&= \dfrac{1}{N^2} \sum_{i=0}^{N-1} \dfrac{N^{i+1}}{\Gamma(i)} \int_{0}^{\infty} z_{i+1}^{2} e^{-N z_{i+1}} \dfrac{\Gamma(2H+1) \Gamma(i)}{\Gamma(2H+i+1)} z_{i+1}^{2H+i} dz_{i+1} \\
%%%%%%%%%%%%%%%%%%%%%%%%
%&= \dfrac{ \Gamma(2H+1)}{N^2} \sum_{i=0}^{N-1}  \dfrac{N^{i+1}}{\Gamma(2H+i+1)} \int_{0}^{\infty} z_{i+1}^{2H+i+3-1} e^{-N z_{i+1}} dz_{i+1} \\
%%%%%%%%%%%%%%%%%
%&= \dfrac{\Gamma(2H+1)}{N^2} \sum_{i=0}^{N-1} \dfrac{N^{i+1}}{\Gamma(2H+i+1)} \times \dfrac{\Gamma(2H+i+3)}{N^{2H+i+3}} \\
%%%%%%%%%%%%%%%%%
%&= \dfrac{\Gamma(2H+1)}{N^2} \sum_{i=0}^{N-1} \dfrac{(2H+i+2)(2H+i+1)}{N^{2H+2}} \\
%%%%%%%%%%%%%%%%%%%%%%
%&= \dfrac{\Gamma(2H+1)}{N^{2H+4}} \sum_{i=0}^{N-1}  \left( i^{2} + i(4H+3) + (4H^{2} + 6H + 2) \right) \\
%%%%%%%%%%%%%%%%%%
%&= \dfrac{\Gamma(2H+1)}{N^{2H+4}} \left( \frac{(N-1)(N)(2(N-1)+1)}{6} \right) +  \dfrac{9 \times \Gamma(2H+1)}{N^{2H+4}} (4H+3) \left( \frac{(N-1)(N)}{2} \right) \\
%& + \dfrac{\Gamma(2H+1)}{N^{2H+4}} (4H^{2} + 6H + 2) N \\
%%%%%%%%%%%%%%%%%%%%%%
%&= \dfrac{1 \times \Gamma(2H+1)}{3 N^{2H+1}} + \dfrac{1}{N^{2H+3}} \left( \frac{3}{2} \Gamma(2H+1) - \frac{9}{2} \Gamma(2H+1) (4H+3) + 9 \Gamma(2H+1) (4H^{2} + 6H + 2) \right).
&\le \frac{ C_6(H)}{N^{2H+1}}. 
\end{align}
To estimates $A_{N}^{(2)}$, we take into account that $|i-j|=1$. Then  
\begin{align}\label{tau2}
A_{N}^{(2)} &= \dfrac{2}{N^{2}} \mathbb{E} \left[ \sum_{i=1}^{N-1} \tau_{i+1} \tau_{i} \left( B_{\tau_{i+1}}^{H} - B_{\tau_{i}}^{H} \right) \left( B_{\tau_{i}}^{H} - B_{\tau_{i-1}}^{H} \right) \right]. 
\end{align}
Note that for  $i=1$ and since $\tau_0=0$, the first term in the sum (\ref{tau2}) denoted by $A_{N,1}^{(2)}$ is equal to  $\frac{1}{N^2} \mathbb{E}  \left[\tau_{2} \tau_{1} \left( B_{\tau_{2}}^{H} - B_{\tau_{1}}^{H} \right) B_{\tau_{1}}^{H}\right]$. This term can be solved in the same way that it was made in $A_N^{(1)}$.

We define  $A_{N,(-1)}^{(2)}$ as the sum in (\ref{tau2}) without the term $A_{N,1}^{(2)}$. Conditioning on $\tau_{i-1},\tau_{i} ,\tau_{i+1}$ and considering the  probability distribution for the 3-valued random vector given in Appendix \ref{A1} we get 
%\begin{align*}
%\dfrac{2}{N^{2}} \mathbb{E} \left[ \tau_{1} \tau_{2} \left( B_{\tau_{2}}^{H} - B_{\tau_{1}}^{H}  \right) \left( B_{\tau_{1}}^{H} - B_{\tau_{0}}^{H}  \right) \right],
%\end{align*}
%
%since $\tau_{0} := 0$ by definition, this term can be solved in the same way that it was made in $A_N^{(1)}$. Then, it remains to be compute
\begin{align*}
A_{N,(-1)}^{(2)} &= \dfrac{2}{N^{2}} \sum_{i=2}^{N-1}  \mathbb{E} \left[ \tau_{i+1} \tau_{i} \left( B_{\tau_{i+1}}^{H} - B_{\tau_{i}}^{H} \right) \left( B_{\tau_{i}}^{H} - B_{\tau_{i-1}}^{H} \right) \right] \\
%&= \left. \dfrac{1}{N^{2}} \sum_{i=2}^{N-1} \mathbb{E} \left[ \mathbb{E} \left[ \tau_{i} \tau_{i+1} \left( B_{\tau_{i+1}}^{H} - B_{\tau_{i}}^{H} \right) \left( B_{\tau_{i}}^{H} - B_{\tau_{i-1}}^{H} \right) \right| \tau_{i-1} =z_{i-1} , \tau_{i} =z_{i}, \tau_{i+1} =z_{i+1} \right] \right] \\
&= \dfrac{2}{N^{2}} \sum_{i=2}^{N-1} \int_{0}^{\infty} \int_{0}^{z_{i+1}} \int_{0}^{z_{i}} z_{i} z_{i+1} \mathbb{E} \left[ \left( B_{z_{i+1}}^{H} - B_{z_{i}}^{H} \right) \left( B_{z_{i}}^{H} - B_{z_{i-1}}^{H} \right) \right] f_{z_{i-1}, z_{i}, z_{i+1}} d_{z_{i-1}} d_{z_{i}} d_{z_{i+1}}.
\end{align*}
Noting that  
\begin{align}
& \mathbb{E} \left[ \left( B_{z_{i+1}}^{H} - B_{z_{i}}^{H} \right) \left( B_{z_{i}}^{H} - B_{z_{i-1}}^{H} \right) \right]
%\mathbb{E} \left[ B_{z_{i+1}}^{H} B_{z_{i}}^{H} \right] - \mathbb{E} \left[ B_{z_{i+1}}^{H} B_{z_{i-1}}^{H} \right] - \mathbb{E} \left[ \left( B_{z_{i}}^{H} \right)^{2} \right] + \mathbb{E} \left[ B_{z_{i}}^{H} B_{z_{i-1}}^{H} \right] \\
%&= \dfrac{1}{2} \left[  z_{i+1}  ^{2H} +  z_{i} ^{2H} - (z_{i+1} - z_{i} )^{2H} \right] - \dfrac{1}{2} \left[  z_{i+1}^{2H} +  z_{i-1} ^{2H} - \left( z_{i+1} - z_{i-1} \right) ^{2H} \right]  -  z_{i}  ^{2H} \nonumber \\
%&+ \dfrac{1}{2} \left[ z_{i} ^{2H} +z_{i-1} ^{2H} - \left( z_{i} - z_{i-1} \right) ^{2H} \right] \\
= \dfrac{1}{2} \left[ \left( z_{i+1} - z_{i-1} \right) ^{2H} - \left( z_{i+1} - z_{i} \right) ^{2H} - \left( z_{i} - z_{i-1} \right) ^{2H} \right],
\end{align}
we obtain 
\begin{align*}
%A_{N,(-1)}^{(2)}  &= \dfrac{1}{ N^{2}} \sum_{i=2}^{N-1} \int_{0}^{\infty} \int_{0}^{z_{i+1}} \int_{0}^{z_{i}} z_{i} z_{i+1} \left[ \left( z_{i+1} - z_{i-1} \right) ^{2H} - \left( z_{i+1} - z_{i} \right) ^{2H} - \left( z_{i} - z_{i-1} \right) ^{2H} \right] \\
%& \times \dfrac{N^{i+1}}{\Gamma(i-1)} z_{i-1}^{i-2} e^{-N z_{i+1}} dz_{i-1} dz_{i} dz_{i+1} \\
%%%%%%%%%%%%%%%%%%%%%%%%%
\left \vert A_{N,(-1)}^{(2)} \right \vert & \leq \dfrac{1}{N^{2}} \sum_{i=2}^{N-1} \int_{0}^{\infty} \int_{0}^{z_{i+1}} \int_{0}^{z_{i}} z_{i} z_{i+1} \left( z_{i+1} - z_{i-1} \right) ^{2H} \dfrac{N^{i+1}}{\Gamma(i-1)} z_{i-1}^{i-2} e^{-N z_{i+1}} dz_{i-1} dz_{i} dz_{i+1} \\
&+ \dfrac{1}{ N^{2}} \sum_{i=2}^{N-1} \int_{0}^{\infty} \int_{0}^{z_{i+1}} \int_{0}^{z_{i}} z_{i} z_{i+1} \left( z_{i+1} - z_{i} \right) ^{2H} \dfrac{N^{i+1}}{\Gamma(i-1)} z_{i-1}^{i-2} e^{-N z_{i+1}} dz_{i-1} dz_{i} dz_{i+1} \\
&+ \dfrac{1}{ N^{2}} \sum_{i=2}^{N-1} \int_{0}^{\infty} \int_{0}^{z_{i+1}} \int_{0}^{z_{i}} z_{i} z_{i+1} \left( z_{i} - z_{i-1} \right) ^{2H} \dfrac{N^{i+1}}{\Gamma(i-1)} z_{i-1}^{i-2} e^{-N z_{i+1}} dz_{i-1} dz_{i} dz_{i+1} \\
%%%%%%%%%%%%%%%%%%%
&= A_{N,(-1)}^{(2,1)} + A_{N,(-1)}^{(2,2)} + A_{N,(-1)}^{(2,3)}.
%& \leq 3 A_{N,(-1)}^{(2,1)}.
\end{align*}
We estimate separetely  the terms  $A_{N,(-1)}^{(2,1)}$,  $A_{N,(-1)}^{(2,2)}$ and $ A_{N,(-1)}^{(2,3)}$.
For  $A_{N,(-1)}^{(2,2)}$ we have the following estimates 
\begin{align}\label{tauA22}
A_{N,(-1)}^{(2,2)}  &= \dfrac{1}{ N^{2}} \sum_{i=2}^{N-1} \int_{0}^{\infty} \int_{0}^{z_{i+1}} \int_{0}^{z_{i}} z_{i} z_{i+1} \left( z_{i+1} - z_{i} \right) ^{2H} \dfrac{N^{i+1}}{\Gamma(i-1)} z_{i-1}^{i-2} e^{-N z_{i+1}} dz_{i-1} dz_{i} dz_{i+1} \nonumber \\
&= \dfrac{1}{ N^{2}} \sum_{i=2}^{N-1} \int_{0}^{\infty} \int_{0}^{z_{i+1}}z_{i}^{i} z_{i+1} \left( z_{i+1} - z_{i} \right) ^{2H} \dfrac{N^{i+1}}{\Gamma(i)} e^{-N z_{i+1}}  dz_{i} dz_{i+1} \nonumber \\
&= \dfrac{1}{ N^{2}} \sum_{i=2}^{N-1} \int_{0}^{\infty} z_{i+1}\dfrac{N^{i+1}}{\Gamma(i)} e^{-N z_{i+1}}  \int_{0}^{z_{i+1}}z_{i}^{i}  \left( z_{i+1} - z_{i} \right) ^{2H}  dz_{i} dz_{i+1}.
\end{align}
By (\ref{inte1}) in Appendix \ref{A1} the last integral in (\ref{tauA22}) is 
$$
\int_{0}^{z_{i+1}}  z_{i}^{i}  \left( z_{i+1} - z_{i} \right) ^{2H}  dz_{i} =  \frac{\Gamma(2H+1) \Gamma(i+1)z_{i+1}^{2H + i + 1} }{\Gamma(2H + i +2)   }.
$$
Plugging the last equality into equation (\ref{tauA22}) we obtain 
\begin{align}\label{tauA22-1}
A_{N,(-1)}^{(2,2)}  &= \dfrac{1}{ N^{2}} \sum_{i=2}^{N-1}\frac{\Gamma(2H+1) \Gamma(i+1) }{\Gamma(2H + i +2)   }   \dfrac{N^{i+1}}{\Gamma(i)}\int_{0}^{\infty} z_{i+1}^{2H +i+2} e^{-N z_{i+1}}  
dz_{i+1} \nonumber \\
  &= \dfrac{\Gamma(2H+1)}{ N^{2}} \sum_{i=2}^{N-1}\frac{i N^{i+1} }{\Gamma(2H + i +2)   }  \int_{0}^{\infty} z_{i+1}^{2H +1+2} e^{-N z_{i+1}}  dz_{i+1}.
\end{align}
By (\ref{inte2}) in Appendix \ref{A1} the last integral in (\ref{tauA22-1}) is 
$$
\int_{0}^{\infty} z_{i+1}^{2H +i+2} e^{-N z_{i+1}}  dz_{i+1}= N^{-2H -i-3} \Gamma(2H +i+3).
$$
Plugging the last equality into the equation (\ref{tauA22-1}) gives
\begin{align}\label{tauA22-2}
A_{N,(-1)}^{(2,2)}  &= \dfrac{\Gamma(2H+1)}{ N^{2H+4}} \sum_{i=2}^{N-1}  i (2H+i+2) \le \frac{C_7(H)}{N^{2H+1}}. 
\end{align}
%\begin{align*}
%A_{N,(-1)}^{(2,1)} = \int_{0}^{z_{i}} \left( z_{i+1} - z_{i} + z_{i} - z_{i-1} \right)^{2H} d z_{i-1}.
%\end{align*} 
Let set $x = z_{i+1} - z_{i}$ and  $y = z_{i} - z_{i-1}$. It is clear that both $x,y \ge 0$. 
Since $1 \leq 2H \leq 2$, then $f(x) = x^{2H}$ is a convex function. This implies  $f(x+y) \leq \frac{1}{2} \left[ f(2x) + f(2y) \right]$.  Then 
\begin{align*}
%(x+y)^{2H} &\leq 2^{-1} \left[ (2x)^{2H} + (2y)^{2H} \right] \\
%%%%%%%%%%%%%%%%%
%&= 2^{2H-1} \left[ (x)^{2H} + (y)^{2H} \right] \\
%%%%%%%%%%%%%%%%%%%%%%%
A_{N,(-1)}^{(2,1)} &\le \dfrac{2^{2H-1} }{ N^{2}} \sum_{i=2}^{N-1} \dfrac{N^{i+1}}{\Gamma(i-1)} \int_{0}^{\infty} z_{i+1} e^{-N z_{i+1}} \int_{0}^{z_{i+1}} z_{i} \int_{0}^{z_{i}} \left( z_{i+1} - z_{i} \right)^{2H} z_{i-1}^{i-2} dz_{i-1} dz_{i} dz_{i+1}  \\
&+ \dfrac{2^{2H-1} }{ N^{2}} \sum_{i=2}^{N-1} \dfrac{N^{i+1}}{\Gamma(i-1)} \int_{0}^{\infty} z_{i+1} e^{-N z_{i+1}} \int_{0}^{z_{i+1}} z_{i} \int_{0}^{z_{i}} \left( z_{i} - z_{i-1} \right)^{2H} z_{i-1}^{i-2} dz_{i-1} dz_{i} dz_{i+1}  \\ 
&= A_{N,(-1)}^{(2,1,1)} + A_{N,(-1)}^{(2,1,2)}.
\end{align*}
We analyse first the term  $A_{N,(-1)}^{(2,1,1)}$.
\begin{align}\label{tauA211}
A_{N,(-1)}^{(2,1,1)} 
%&= \dfrac{2^{2H-1}}{ N^{2}} \sum_{i=2}^{N-1} \dfrac{N^{i+1}}{\Gamma(i-1)} \int_{0}^{\infty} z_{i+1} e^{-N z_{i+1}} \int_{0}^{z_{i+1}} z_{i} \left( z_{i+1} - z_{i} \right)^{2H} \dfrac{z_{i}^{i-1}}{i-1} dz_{i} dz_{i+1} \\
&= \dfrac{2^{2H-1}}{2 N^{2}} \sum_{i=2}^{N-1} \dfrac{N^{i+1}}{\Gamma(i)} \int_{0}^{\infty} z_{i+1} e^{-N z_{i+1}} \int_{0}^{z_{i+1}} z_{i}^{i} \left( z_{i+1} - z_{i} \right)^{2H} dz_{i} dz_{i+1}.
\end{align}
By (\ref{inte1}) in Appendix \ref{A1}, the equality (\ref{tauA211}) becomes
\begin{align}\label{tauA211-1}
A_{N,(-1)}^{(2,1,1)} &= \dfrac{2^{2H-1}}{ N^{2}} \sum_{i=2}^{N-1} \dfrac{N^{i+1}}{\Gamma(i)} \int_{0}^{\infty} z_{i+1} e^{-N z_{i+1}} \dfrac{\Gamma(2H+1) \Gamma(i+1)}{\Gamma(2H+i+2)} z_{i+1}^{2H+i+1} dz_{i+1} \nonumber \\
&= \dfrac{2^{2H-1} \Gamma(2H+1)}{ N^{2}} \sum_{i=2}^{N-1} i \dfrac{N^{i+1}}{\Gamma(2H+i+2)} \int_{0}^{\infty} z_{i+1}^{2H+i+2} e^{-N z_{i+1}} dz_{i+1}.
\end{align}
Invoking  (\ref{inte2}) in Appendix \ref{A1} we get the following estimates for (\ref{tauA211-1}),
\begin{align}\label{tauA211-}
A_{N,(-1)}^{(2,1,1)} &= \dfrac{2^{2H-1} \Gamma(2H+1)}{ N^{2}} \sum_{i=2}^{N-1} \frac{ i (2H+i+2)} {N^{2H+2}} \leq \frac{C_8(H)}{N^{2H+1}}.
%&= \dfrac{2^{2H-1} \times \Gamma(2H+1)}{2 N^{2}} \sum_{i=2}^{N-1} \left( i \times (2H+i+2) \times \dfrac{1}{N^{2H+2}} \right) \\
%&= \dfrac{2^{2H-1} \times \Gamma(2H+1)}{2 N^{2H+4}} (2H+2) \sum_{i=2}^{N-1} i + \dfrac{2^{2H-1} \times \Gamma(2H+1)}{2 N^{2H+4}} \sum_{i=2}^{N-1} i^{2} \\
\end{align}
Let us consider the term  $A_{N,(-1)}^{(2,1,2)}$. With the same techniques  as in the case $A_{N,(-1)}^{(2,1,1)}$ (see equation (\ref{tauA211})), and  taking into account (\ref{inte1}) and (\ref{inte2}) we obtain 
\begin{align}\label{tauA212}
A_{N,(-1)}^{(2,1,2)} &\le \frac{C_9(H)}{N^{2H+1}}.
%= \dfrac{2^{2H-1}}{2N^2} \sum_{i=2}^{N-1} \dfrac{N^{i+1}}{\Gamma(i-1)} \int_{0}^{\infty} z_{i+1} e^{-N z_{i+1}} \int_{0}^{z_{i+1}} z_{i} \int_{0}^{z_{i}} \left( z_{i} - z_{i-1} \right) ^{2H} z_{i-1}^{i-2} dz_{i-1} dz_{i} dz_{i+1}  \\
%%%%%%%%%%%%
%&= \dfrac{2^{2H-1}}{2N^2} \sum_{i=2}^{N-1} \dfrac{N^{i+1}}{\Gamma(i-1)} \int_{0}^{\infty} z_{i+1} e^{-N z_{i+1}} \int_{0}^{z_{i+1}} z_{i} \dfrac{\Gamma(2H+1) \Gamma(i-1)}{\Gamma(2H+i)} z_{i}^{2H+i-1}   dz_{i} dz_{i+1}  \\
%%%%%%%%%%%%%%
%&= \dfrac{2^{2H-1} \Gamma(2H+1)}{2N^2} \sum_{i=2}^{N-1} \dfrac{N^{i+1}}{\Gamma(2H+i)}  \int_{0}^{\infty} z_{i+1} e^{-N z_{i+1}} \dfrac{z_{i+1}^{2H+i+1}}{(2H+i+1)} dz_{i+1} \\
%%%%%%%%%%%%%
%&=  \dfrac{2^{2H-1} \Gamma(2H+1)}{2N^2} \sum_{i=2}^{N-1} \dfrac{N^{i+1}}{\Gamma(2H+i) (2H+i+1)}  \int_{0}^{\infty} z_{i+1}^{2H+i+2} e^{-N z_{i+1}} dz_{i+1}  \\ 
%%%%%%%%%%%%%%%
%&= \dfrac{2^{2H-1} \Gamma(2H+1)}{2N^2} \sum_{i=2}^{N-1} \dfrac{(2H+i) (2H+i+2) \Gamma(2H+i+2)}{\Gamma(2H+i+2)} \dfrac{N^{i+1}}{N^{2H+i+3}} \\
%%%%%%%%%%%%%%%%
%&=  \dfrac{2^{2H-1} \Gamma(2H+1)}{2N^{2H+4}} \sum_{i=2}^{N-1}  (2H+i) (2H+i+2) \\ 
%%%%%%%%%%%%%%%%
%&= \dfrac{2^{2H-1} \Gamma(2H+1)}{2N^{2H+4}} \sum_{i=2}^{N-1}  \left[ i^{2} + i(4H+2) + (4H^2 + 4H) \right] 
%%%%%%%%%%%%%%%%
\end{align}
Finally, we examine the term $A_N^{(3)}$ in (\ref{Desc}) 
\begin{align*}
A_N^{(3)} 
%&= \left. \dfrac{2}{N^{2}} \sum_{i,j=1, \dots, N; (i-j) \geq 2} \mathbb{E} \left[ \mathbb{E} \left[ \tau_{i+1} \tau_{j+1} \left( B_{\tau_{i+1}}^{H} - B_{\tau_{i}}^{H} \right) \left( B_{\tau_{j+1}}^{H} - B_{\tau_{j}}^{H} \right) \right| \tau_{j} = z_{j}, \tau_{j+1} = z_{j+1},  \tau_{i} = z_{i}, \tau_{i+1} = z_{i+1} \right]  \right] \\
&= \dfrac{2}{N^{2}} \sum_{i,j=1, \dots, N; (i-j) \geq 2} \int_{0}^{\infty} \int_{0}^{z_{i+1}} \int_{0}^{z_{i}} \int_{0}^{z_{j+1}} z_{i+1} z_{j+1} \mathbb{E} \left[ \left(B_{z_{i+1}}^{H} - B_{z_{i}}^{H} \right) \left(B_{z_{j+1}}^{H} - B_{z_{j}}^{H} \right) \right] \\
&\times  f_{\tau_{j}, \tau_{j+1}, \tau_{i}, \tau_{i+1}}(z_{j}, z_{j+1}, z_{i}, z_{i+1}) dz_{j} dz_{j+1} dz_{i} dz_{i+1}.
\end{align*}
Note that 
%\begin{align}\label{IFBM}
%\mathbb{E} \left[ \left(B_{z_{i+1}}^{H} - B_{z_{i}}^{H} \right) \left(B_{z_{j+1}}^{H} - B_{z_{j}}^{H} \right) \right] &= 
%%\mathbb{E} \left[ B_{z_{i+1}}^{H} B_{z_{j+1}}^{H}   \right] - \mathbb{E} \left[ B_{z_{i+1}}^{H} B_{z_{j}}^{H}   \right] - \mathbb{E} \left[ B_{z_{i}}^{H} B_{z_{j+1}}^{H}   \right] + \mathbb{E} \left[ B_{z_{i}}^{H} B_{z_{j}}^{H}   \right] \\
%%&= \dfrac{1}{2} \left[ (z_{i+1})^{2H} + (z_{j+1})^{2H} - (z_{i+1} - z_{j+1})^{2H} \right] \\
%%&- \dfrac{1}{2} \left[ (z_{i+1})^{2H} + (z_{j})^{2H} - (z_{i+1} - z_{j})^{2H} \right] \\
%%&- \dfrac{1}{2} \left[ (z_{i})^{2H} + (z_{j+1})^{2H} - (z_{i} - z_{j+1})^{2H} \right] \\
%%&+ \dfrac{1}{2} \left[ (z_{i})^{2H} + (z_{j})^{2H} - (z_{i} - z_{j})^{2H} \right] \\
% \dfrac{1}{2} \left[(z_{i+1} - z_{j})^{2H} + (z_{i} - z_{j+1})^{2H} - (z_{i+1} - z_{j+1})^{2H} - (z_{i} - z_{j})^{2H}   \right] 
%\end{align}
%%%%%%

\begin{align*}
\left| \mathbb{E} \left[ \left(B_{z_{i+1}}^{H} - B_{z_{i}}^{H} \right) \left(B_{z_{j+1}}^{H} - B_{z_{j}}^{H} \right) \right]  \right| & \leq \dfrac{1}{2} \left[(z_{i+1} - z_{j})^{2H} + (z_{i} - z_{j+1})^{2H} + (z_{i+1} - z_{j+1})^{2H} + (z_{i} - z_{j})^{2H}  \right] \\
& \leq \dfrac{1}{2} \left( 2^{2H-1}+1\right)^2 \left[ \left( z_{i+1} - z_{i} \right)^{2H} + \left( z_{i} - z_{j+1} \right)^{2H} +\left( z_{j+1} - z_{j} \right)^{2H} \right].
\end{align*}

%%%%%%%%%%%%%
%\begin{align*}
%\vert A_N^{(3)} \vert & \leq \dfrac{2^{2H}}{N^{2}} \sum_{i,j=1, \dots, N; (i-j) \geq 2} \int_{0}^{\infty} \int_{0}^{z_{i+1}} \int_{0}^{z_{i}} \int_{0}^{z_{j+1}} z_{i+1} z_{j+1} (z_{i+1} - z_{j})^{2H} \\
%& \dfrac{N^{i+1}}{\Gamma(j) \Gamma(i-j-1)} z_{j}^{j-1} (z_{i} - z_{j+1})^{i-j-2} e^{-N z_{i+1}} dz_{j} dz_{j+1} dz_{i} dz_{i+1} .
%\end{align*}
%Note that 
%\begin{align*}
%(z_{i+1} - z_{j})^{2H} = \left( (z_{i+1} - z_{i}) + (z_{i} - z_{j+1}) + (z_{j+1} - z_{j}) \right)^{2H},
%\end{align*}
%
%where all terms are non-zero and positive. Considering that $1 \leq 2H \leq 2$, the last one is a convex function and this allows to write the following (due to (\ref{convex2}))
%%%%%%%%%%%
%\begin{align*}
%\left( (z_{i+1} - z_{i}) + (z_{i} - z_{j+1}) + (z_{j+1} - z_{j}) \right)^{2H}  \leq 2^{2H-1} (z_{i+1} - z_{i})^{2H} + 2^{2(2H-1)} (z_{i} - z_{j+1})^{2H} + 2^{2(2H-1)} (z_{j+1} - z_{j})^{2H}.
%\end{align*}
%%%%%%%

According to the probability joint density function $f_{\tau_{j}, \tau_{j+1}, \tau_{i}, \tau_{i+1}}(z_{j}, z_{j+1}, z_{i}, z_{i+1})$ given in Appendix \ref{A1} we have the following estimates for  $\vert A_N^{(3)} \vert$: 

\begin{align*}
\vert A_N^{(3)} \vert &=  \dfrac{ \left( 2^{2H-1}+1\right)^2}{N^{2}} \sum_{i,j=1, \dots, N; (i-j) \geq 2} \int_{0}^{\infty} \int_{0}^{z_{i+1}} \int_{0}^{z_{i}} \int_{0}^{z_{j+1}} z_{i+1} z_{j+1} (z_{i+1} - z_{i})^{2H}   \\
& \dfrac{N^{i+1}}{\Gamma(j) \Gamma(i-j-1)} z_{j}^{j-1} (z_{i} - z_{j+1})^{i-j-2} e^{-N z_{i+1}} dz_{j} dz_{j+1} dz_{i} dz_{i+1} \\
%%%%%%%%%%%%%%%%%%
&+ \dfrac{ \left( 2^{2H-1}+1\right)^2}{N^{2}} \sum_{i,j=1, \dots, N; (i-j) \geq 2} \int_{0}^{\infty} \int_{0}^{z_{i+1}} \int_{0}^{z_{i}} \int_{0}^{z_{j+1}} z_{i+1} z_{j+1} (z_{i} - z_{j+1})^{2H}  \\
& \dfrac{N^{i+1}}{\Gamma(j) \Gamma(i-j-1)} z_{j}^{j-1} (z_{i} - z_{j+1})^{i-j-2} e^{-N z_{i+1}} dz_{j} dz_{j+1} dz_{i} dz_{i+1} \\
&+ \dfrac{ \left( 2^{2H-1}+1\right)^2}{N^{2}} \sum_{i,j=1, \dots, N; (i-j) \geq 2} \int_{0}^{\infty} \int_{0}^{z_{i+1}} \int_{0}^{z_{i}} \int_{0}^{z_{j+1}} z_{i+1} z_{j+1} (z_{j+1} - z_{j})^{2H} \\
& \dfrac{N^{i+1}}{\Gamma(j) \Gamma(i-j-1)} z_{j}^{j-1} (z_{i} - z_{j+1})^{i-j-2} e^{-N z_{i+1}} dz_{j} dz_{j+1} dz_{i} dz_{i+1} \\
% %%%%%%%%%%
&= A_{N}^{(3,1)} + A_{N}^{(3,2)} + A_{N}^{(3,3)}.
\end{align*}
We first analyse  $A_{N}^{(3,1)}$. By using the expressions  given in Appendix  \ref{A1} by the formulas  (\ref{inte1}) and (\ref{inte2}) we have    
\begin{align}\label{A3-1-1}
A_{N}^{(3,1)} &= \dfrac{ \left( 2^{2H-1}+1\right)^2}{N^{2}} \sum_{i,j=1, \dots, N; (i-j) \geq 2} \dfrac{N^{i+1}}{\Gamma(j) \Gamma(i-j-1)} \int_{0}^{\infty} z_{i+1} e^{-N z_{i+1}} \int_{0}^{z_{i+1}} (z_{i+1} - z_{i})^{2H} \nonumber \\
& \int_{0}^{z_{i}} z_{j+1} (z_{i} - z_{j+1})^{i-j-2} \int_{0}^{z_{j+1}} z_{j}^{j-1}  dz_{j} dz_{j+1} dz_{i} dz_{i+1} \nonumber \\
%%%%%%%%%%%%%%%%%%%%%%%
%&= \dfrac{2^{2H-1}}{N^{2}} \sum_{i,j=1, \dots, N; (i-j) \geq 2} \dfrac{N^{i+1}}{\Gamma(j) \Gamma(i-j-1)} \int_{0}^{\infty} z_{i+1} e^{-N z_{i+1}} \int_{0}^{z_{i+1}} (z_{i+1} - z_{i})^{2H}  \\
%& \int_{0}^{z_{i}} z_{j+1} (z_{i} - z_{j+1})^{i-j-2} \dfrac{z_{j+1}^{j}}{j} dz_{j+1} dz_{i} dz_{i+1} \\
%%%%%%%%%%%%%%%%%%%%
&=  \dfrac{ \left( 2^{2H-1}+1\right)^2}{N^{2}} \sum_{i,j=1, \dots, N; (i-j) \geq 2} \dfrac{N^{i+1}}{\Gamma(j+1) \Gamma(i-j-1)} \int_{0}^{\infty} z_{i+1} e^{-N z_{i+1}} \int_{0}^{z_{i+1}} (z_{i+1} - z_{i})^{2H} \nonumber  \\
& \int_{0}^{z_{i}} z_{j+1}^{j+1} (z_{i} - z_{j+1})^{i-j-2} dz_{j+1} dz_{i} dz_{i+1} \nonumber \\
%%%%%%%%%%%%%%%%%%%%%%%
&= \dfrac{ \left( 2^{2H-1}+1\right)^2}{N^{2}} \sum_{i,j=1, \dots, N; (i-j) \geq 2} \dfrac{N^{i+1}}{\Gamma(j+1) \Gamma(i-j-1)} \int_{0}^{\infty} z_{i+1} e^{-N z_{i+1}} \int_{0}^{z_{i+1}} (z_{i+1} - z_{i})^{2H}   \nonumber \\
& \dfrac{\Gamma(i-j-1) \Gamma(j+2)}{\Gamma(i+1)} z_{i}^{i} dz_{i} dz_{i+1}  \nonumber \\
%%%%%%%%%%%%%%%%%%%
%&= \dfrac{\left( 2^{2H-1}+1\right)^2}{N^{2}} \sum_{i,j=1, \dots, N; (i-j) \geq 2} \dfrac{N^{i+1}}{\Gamma(j+1) \Gamma(i-j-1)} \dfrac{\Gamma(i-j-1) \Gamma(j+2)}{\Gamma(i+1)} \int_{0}^{\infty} z_{i+1} e^{-N z_{i+1}} \\
%& \int_{0}^{z_{i+1}} z_{i}^{i} (z_{i+1} - z_{i})^{2H}  dz_{i} dz_{i+1} \\
%%%%%%%%%%%%%%%%%%
&=\dfrac{\left( 2^{2H-1}+1\right)^2}{N^{2}} \sum_{i,j=1, \dots, N; (i-j) \geq 2} \dfrac{N^{i+1}(j+1) \Gamma(2H+1) }{\Gamma(2H+i+2)}  \int_{0}^{\infty} e^{-N z_{i+1}}  z_{i+1}^{2H+i+2}  dz_{i+1}  \nonumber \\
%%%%%%%%%%%%%%%%%%%%
%&=\dfrac{2^{2H-1} \Gamma(2H+1)}{N^{2}} \sum_{i,j=1, \dots, N; (i-j) \geq 2} \dfrac{N^{i+1}}{\Gamma(i+1)} \dfrac{\Gamma(j+2)}{\Gamma(j+1)} \dfrac{\Gamma(i+1)}{\Gamma(2H+i+2)} \\
%& \int_{0}^{\infty} z_{i+1}^{2H+i+2} e^{-N z_{i+1}} dz_{i+1} \\
%%%%%%%%%%%%%%%%%%%%%%%%%%
&= \dfrac{\left( 2^{2H-1}+1\right)^2 \Gamma(2H+1)}{N^{2}} \sum_{i,j=1, \dots, N; (i-j) \geq 2} \dfrac{N^{i+1} (j+1)}{\Gamma(2H+i+2)} \dfrac{\Gamma(2H+i+3)}{N^{2H+i+3}} \nonumber  \\
%%%%%%%%%%%%%%%%
&= \dfrac{\left( 2^{2H-1}+1\right)^2 \Gamma(2H+1)}{N^{2H+4}} \sum_{i,j=1, \dots, N; (i-j) \geq 2} (j+1) (2H+i+2)  \leq \dfrac{C_{10}(H)}{N^{2H}}.
\end{align}
To estimate $A_{N}^{(3,2)}$ and $A_{N}^{(3,3)}$ we proceed as  in the case of  $A_{N}^{(3,1)}$ obtaining 
\begin{equation}\label{A3-2-2}
A_{N}^{(3,2)} \le \frac{C_{11}}{N^{2H}}\quad \mbox{and} \quad  A_{N}^{(3,3)} \le \frac{C_{12}}{N^{2H}}.
\end{equation}
Finally combining (\ref{tauA22-2}), (\ref{tauA211-}), (\ref{tauA212}),  (\ref{A3-1-1}) and (\ref{A3-2-2})  we obtain the desired result.

\end{proof}

\section{Simulation Study}
In this section, we present a Monte Carlo simulation study to assess the finite sample properties for the least squares estimator in the linear regression model driven by a fractional Brownian motion evaluated at random times defined by equations (\ref{io}) and (\ref{rp}). 

First, we  will work with the model observed in equally spaced times, defined by Equation (\ref{modelo}). We simulate $M= 1000$ replicates of the model with $a=1$ and  different values of $N$ from $N=1$ to $N=300$, and we take the average of the  estimators  $\hat{a}_N$ for each $N$. 
We consider three values of Hurst parameter $H = 0.55;  0.75$ and $0.95$.  The data were simulated according to
$$Y_{t_i+1}=a \, t_{i+1}+\Delta B^H_{t_i+1},$$
\noindent where $\Delta B^H_{t_i+1}= B^H_{t_i+1}- B^H_{t_i}$ denotes the increments of the fractional Brownian motion and $t_{i+1}=\frac{i+1}{N}$.
We illustrate the convergence speed of the parameter estimations $\hat{a}_N$ with graphical analysis. In Figure (\ref{convdet})  we can see that for different values of $H$,  the estimation of the parameter reaches in very few iterations.

\begin{figure}[h!]
\centering
\includegraphics[scale=0.7]{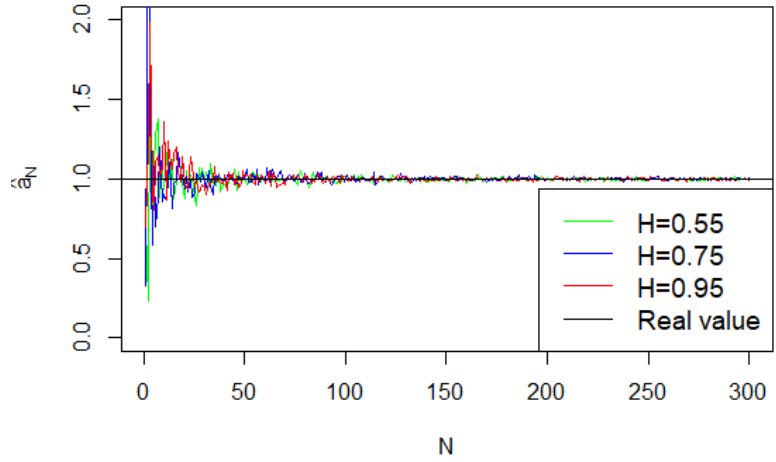}
\caption{Convergence of $\hat{a}_N$ for different values of $H$}\label{convdet}
\end{figure}

We consider now, the estimation of parameter in the case of the model observed at sampling random times. We take in this case the value $a=2$. Tables (\ref{tabla-io1}) and  (\ref{tabla-rp1}) present the simulation results for estimation in case 1: Jittered sampling (according to Equation (\ref{io})) and case 2: Renewal process (see Equation (\ref{rp})). Performance statistics presented are the mean, standard deviation (SD) and Kurtosis. The Kurtosis is defined as the difference between kurtosis of a Gaussian distribution and of the simulated process. 
For all $H$ values, there is a small bias with respect to the true parameter value. For $H=0.95$ there is a small standard deviation, which is expected since, in the context of long-range dependence processes, it is quite common for the process to be less noisy.
\newpage
\begin{table}[h!]
\centering
\begin{tabular}{cccc}
\hline \hline
$a=2$ & $H=0.55$ & $H=0.75$ & $H=0.95$ \\ \hline
Mean  & 1.998539 & 2.000285 & 1.999902 \\
SD    & 0.09950515 & 0.03627706 & 0.01497767 \\ 
Kurtosis & 0.518058 & 1.154266 &  0.817443\\ \hline \hline
\end{tabular}
\caption{Jittered sampling: Mean, standard deviation and kurtosis for different values of $H$ and $a = 1$} \label{tabla-io1}
\end{table}
\begin{table}[h!]
\centering
\begin{tabular}{cccc}
\hline \hline
$a=2$ & $H=0.55$ & $H=0.75$ & $H=0.95$ \\ \hline
Mean  & 2.006504 & 2.014654 & 2.002542 \\
SD    & 0.2197203 & 0.2208552 & 0.219923 \\ 
Kurtosis & 0.437066 & 0.078133 &  0.644303\\ \hline \hline
\end{tabular}
\caption{Renewal process: Mean, standard deviation and kurtosis for different values of $H$ and $a = 1$} \label{tabla-rp1}
\end{table}

As in the case of non-random times, we illustrate the convergence speed of the parameter estimations $\hat{a}_N$ with a graphical analysis. In Figure (\ref{RT-speed}) we can see that for different values of $H$. It takes averagely no more then $50$ iterations to reach complete convergence.
\begin{figure}[h!]
%\centering
\begin{subfigure}[b]{0.3\textwidth}
        \includegraphics[width=4.6cm, angle=270]{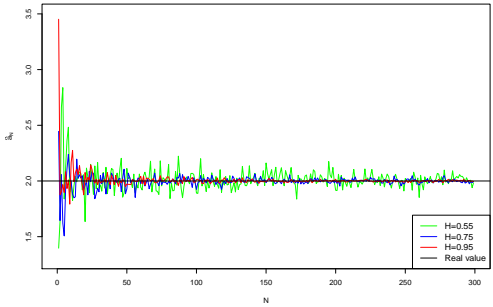}
        \caption{Convergence of $\hat{a}_N$ for different values of $H$ in Jittered sampling}
   \end{subfigure}
     ~   ~   ~   ~   ~   ~   ~   ~   ~   ~  ~   ~    
 \begin{subfigure}[b]{0.3\textwidth}
        \includegraphics[width=4.6cm, angle=270]{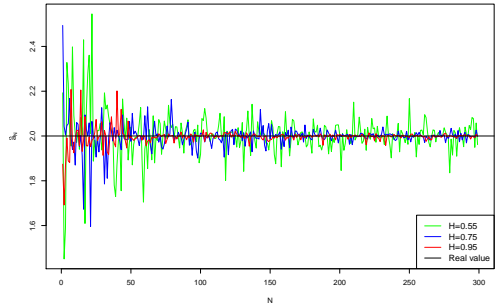}
        \caption{Convergence of $\hat{a}_N$ for different values of $H$ in Renewal process}
   \end{subfigure}
   \caption{Convergence of $\hat{a}_N$ for different values of $H$.}\label{RT-speed}
\end{figure}

Overall, the results in Tables  (\ref{tabla-io1}) and (\ref{tabla-rp1}), for all $H$'s show a smaller bias for the parameter and a considerably small standard deviation. As expected, as $N$ increases, the bias in the estimates decrease and so does the Standard Deviation (and consequently the variance and the MSE), which is a reflection of the consistency of the estimator.

\newpage
\begin{figure}[h!]
    \centering
    \begin{subfigure}[b]{0.3\textwidth}
        \includegraphics[width=3.5cm, angle=270]{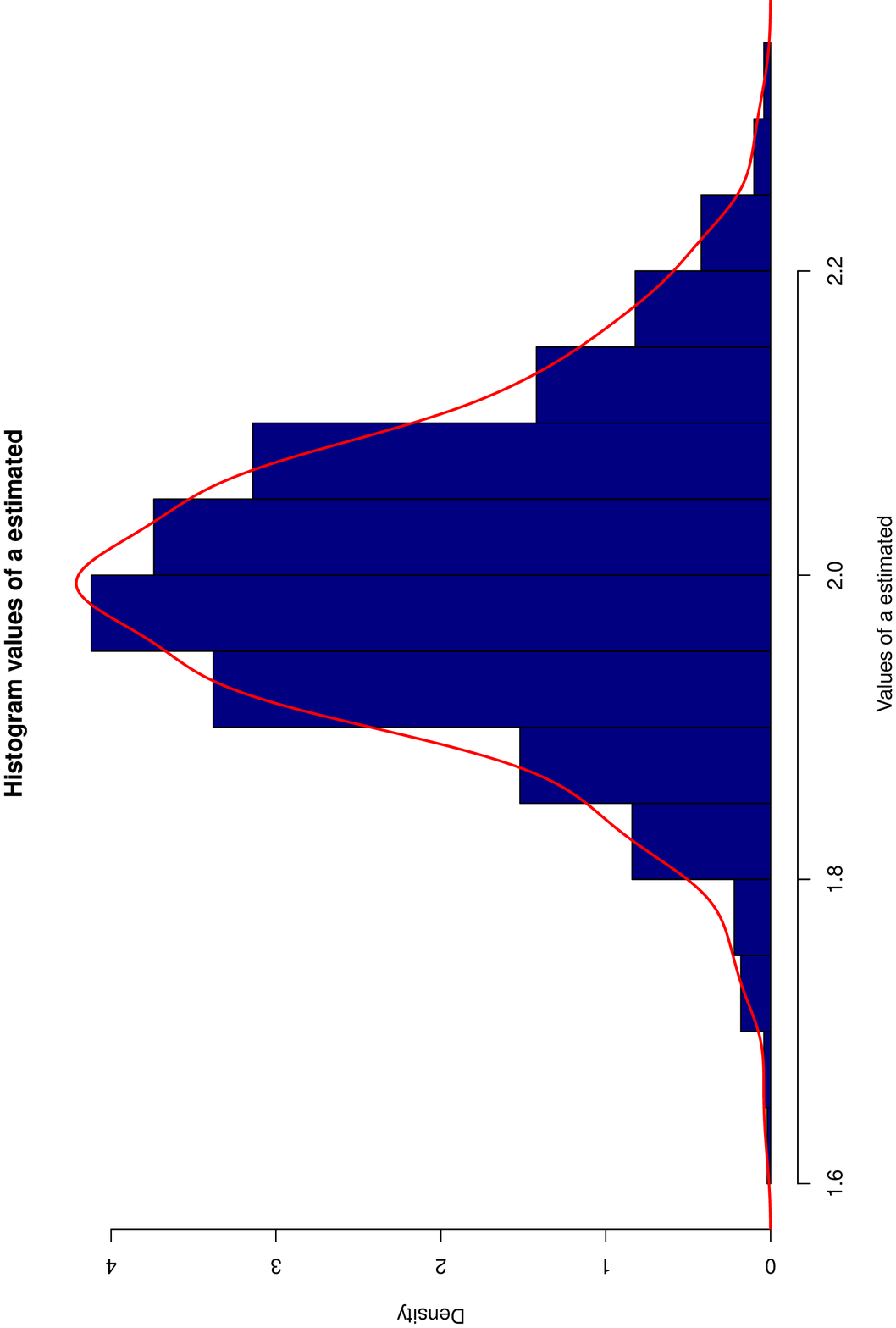}
        \caption{$H=0.55$}
   \end{subfigure}
    ~ 
    \begin{subfigure}[b]{0.3\textwidth}
        \includegraphics[width=3.5cm, angle=270]{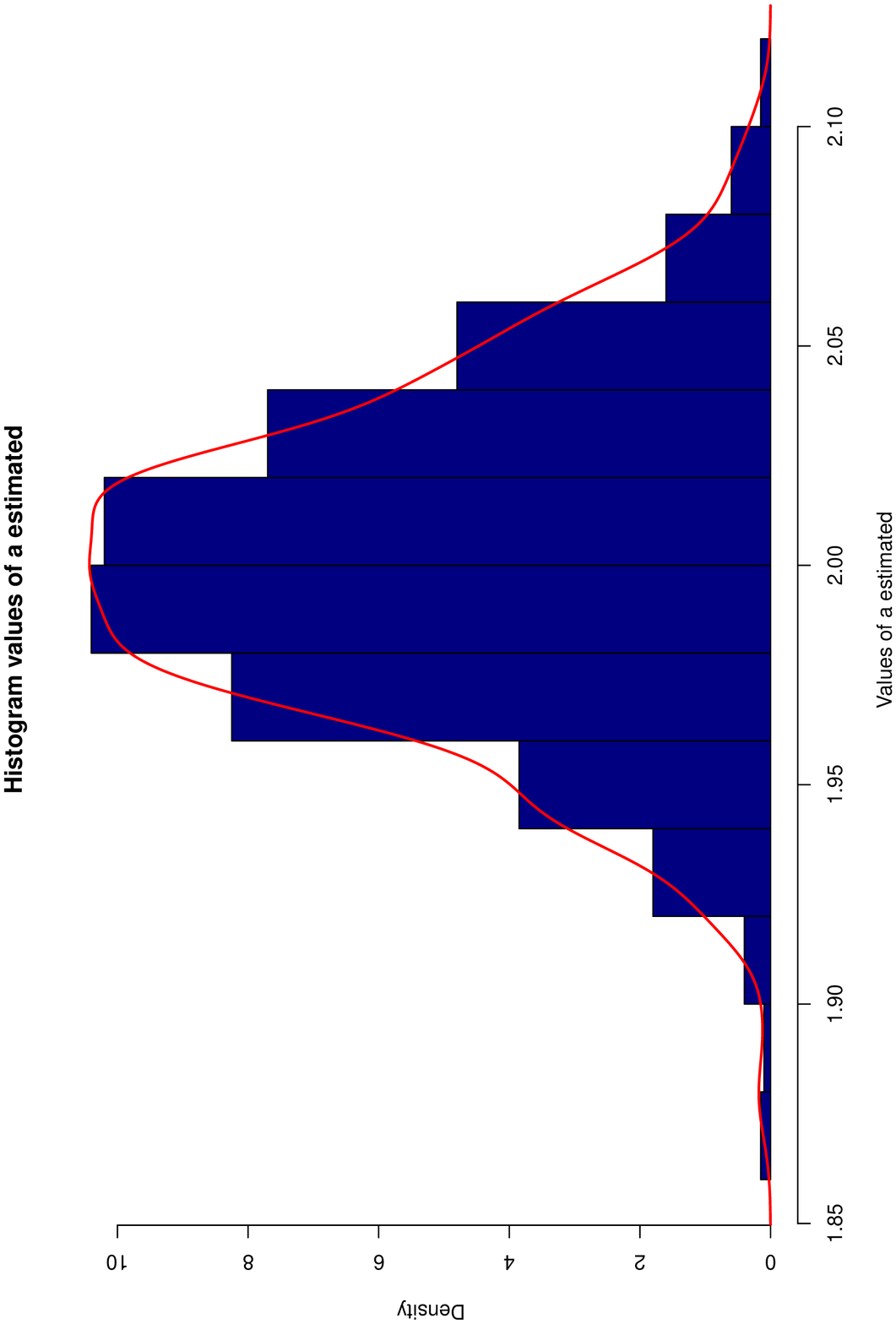}
        \caption{$H=0.75$}
 \end{subfigure}
    ~ 
    \begin{subfigure}[b]{0.3\textwidth}
        \includegraphics[width=3.5cm, angle=270]{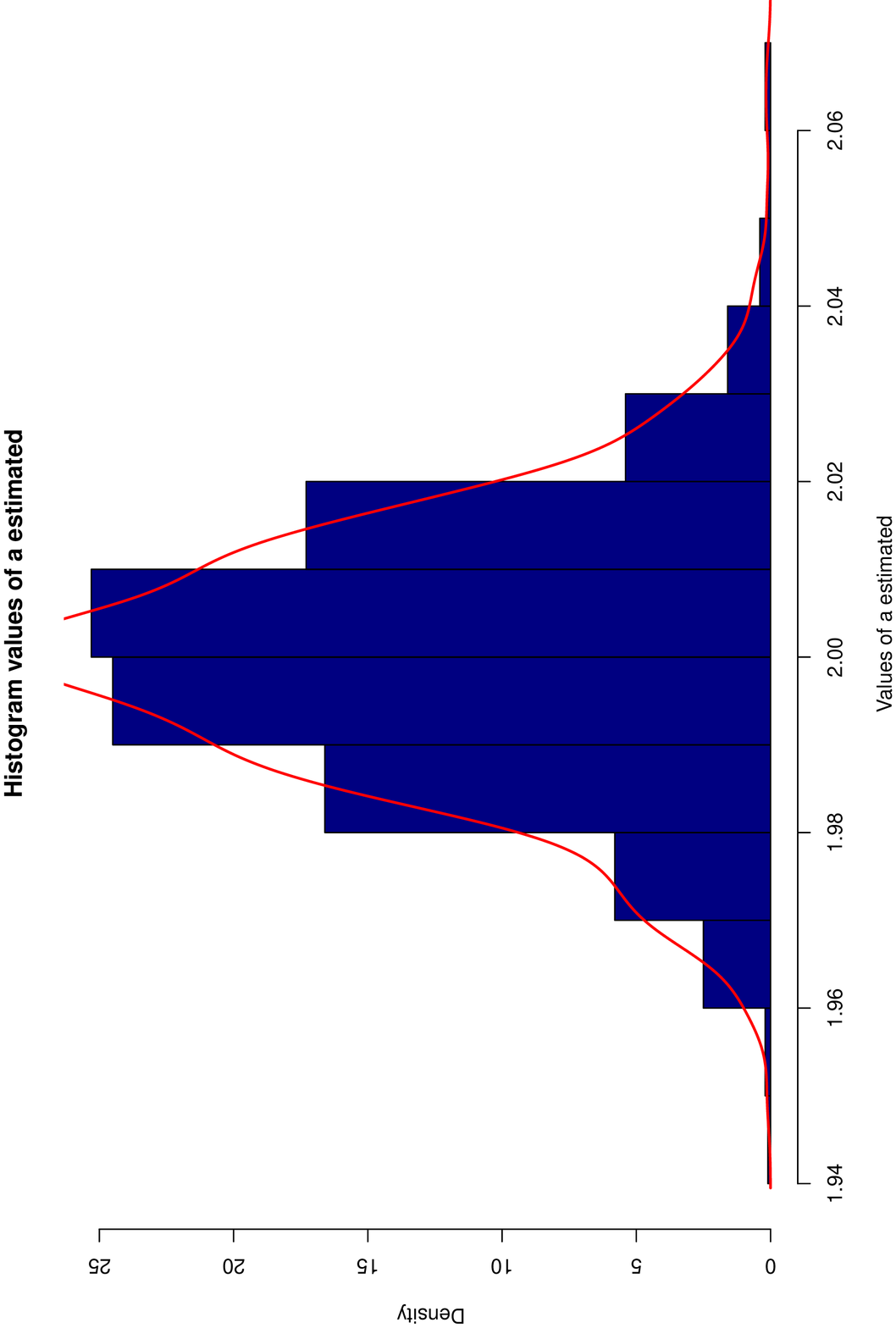}
        \caption{$H=0.95$}
 \end{subfigure}
    \caption{Jittered sampling: Histograms of values of $\hat{a}_N$ with different values of $H$}\label{io1}
\end{figure}
\begin{figure}[h!]
    \centering
    \begin{subfigure}[b]{0.3\textwidth}
        \includegraphics[width=3.5cm, angle=270]{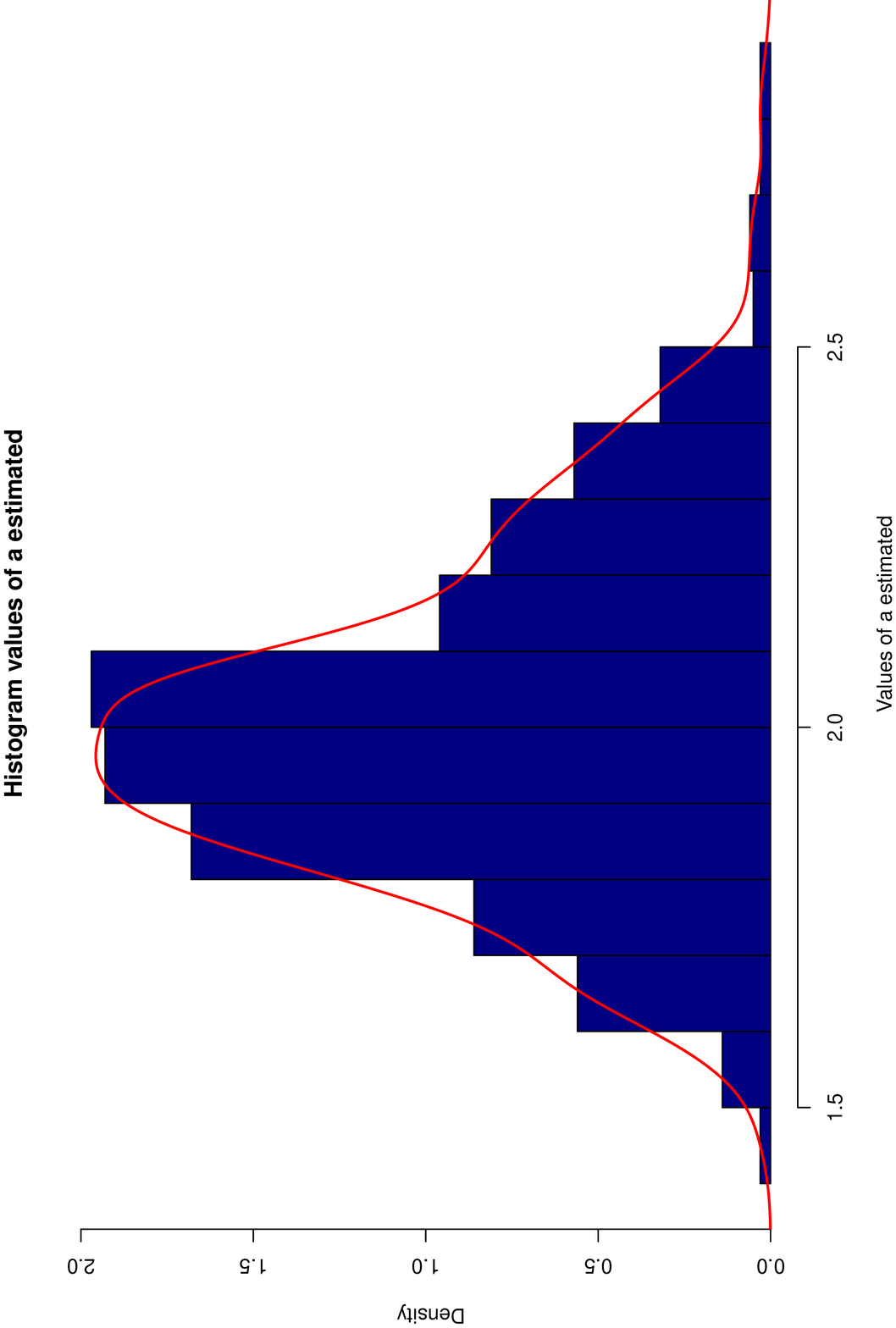}
        \caption{$H=0.55$}
  \end{subfigure}
    ~ 
    \begin{subfigure}[b]{0.3\textwidth}
        \includegraphics[width=3.5cm, angle=270]{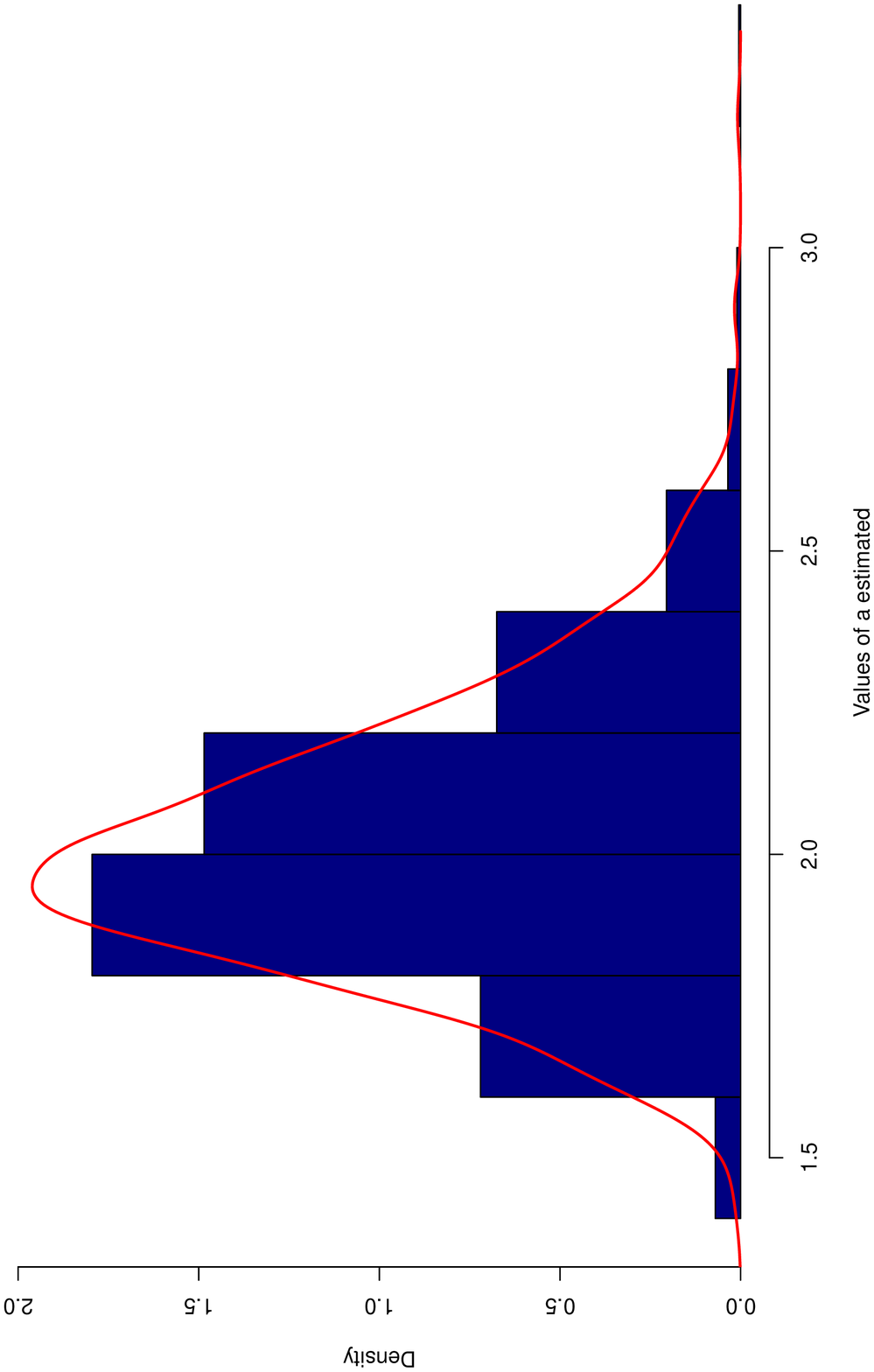}
        \caption{$H=0.75$}
  \end{subfigure}
    ~ 
    \begin{subfigure}[b]{0.3\textwidth}
        \includegraphics[width=3.5cm, angle=270]{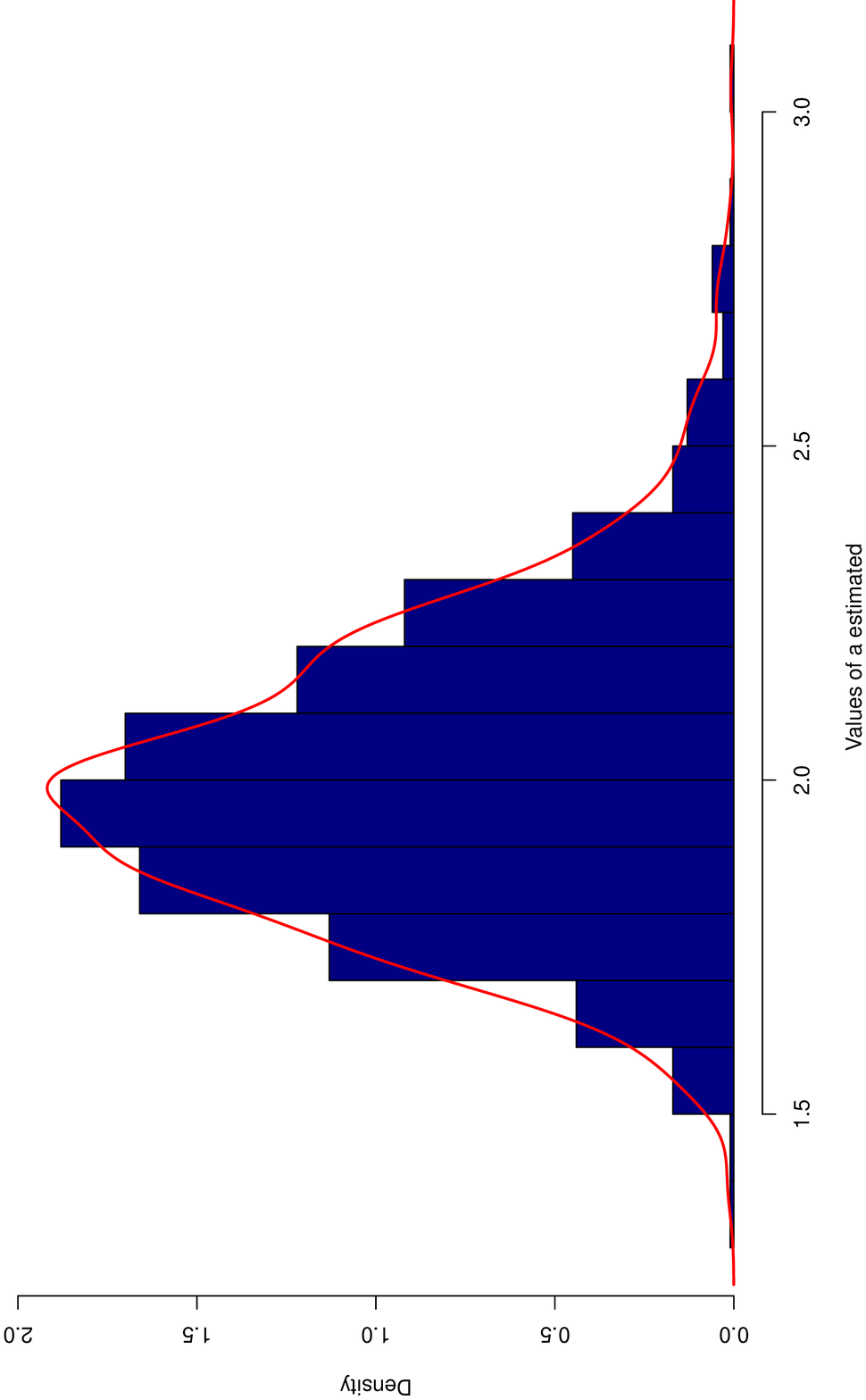}
        \caption{$H=0.95$}
  \end{subfigure}
    \caption{Renewal process: Histograms of values of $\hat{a}_N$ with different valor of $H$}\label{io10}
\end{figure}

Figures (\ref{io1})  and (\ref{io10}) show the frequency histograms (sampling distribution) of the 1000 values generated for different values of $H$ and the parameter $a=2$ follow Jittered sampling and Renewal processes respectively. 

A clear break occurs at $H = 0.75$ in the case of the renewal process, in which the histogram looks rather normal, and has asymmetric with a long right tail and no left tail, and a strong pointedness.

In conclusion, we can see that the estimation for  parameter $a$ is very precise and constant for all values of $H$ studied here, and for two different developing situations sampling schemes. Furthermore, in all cases and sampling, the estimation is very accurate showing that our estimation procedure is a good alternative to estimate parameters in a linear regression model with random times and long memory noise. 

\newpage

\section{Appendix: Joint distributions for renewal process  sampling and  estimates.}\label{A1}

In this section we present the joint distribution associated to the sequence of random variables  $\{ t_i, i=1, \ldots , N\}$ and $\{\tau_i,  i=1, \ldots , N\}$, where
\begin{align*}
\mbox{For} \quad  1 \leq i \leq N; \; t_{i} & \sim Gamma(1,N)\;  \text{are i.i.d. random variables, } \\ \mbox{For} \quad  1 \leq i \leq N; \; \tau_{i} &= \sum_{j=1}^{i} t_{j}  \sim Gamma(i,N) \; \text{and} \\
\mbox{For} \quad  1 \leq i \leq N-1; \; \tau_{i+1} &= \tau_{i} + t_{i+1}. \;\;  \text{Here $\tau_{i}$ is independent of $t_{i+1}$}.
\end{align*}

\begin{table}[h!]
\centering
\begin{tabular}{|c|c|c|}
\hline
Joint distribution & Probability Density Function & Support \\\hline
$f_{\tau_{i} , t_{i+1}}({a, b})$ & $\dfrac{N^{i+1}}{\Gamma(i)} a^{i-1} e^{-N (a + b)}$ & $0 \leq a < \infty$ \\
                                &                                                                                                          & $0 \leq b < \infty$ \\ \hline
$f_{\tau_{i} , \tau_{i+1}}({a, b}) $ & $\dfrac{N^{i+1}}{\Gamma(i)} a^{i-1}  e^{-N b}$ & $0 \leq a \leq b$  \\
                                  &                                                                                          & $0 \leq b < \infty$ \\ \hline
$f_{\tau_{i-1} , t_{i}, t_{i+1}}({a, b, c})$ & $\dfrac{N^{i+1}}{\Gamma(i-1)} a^{i-2} e^{-N (a + b + c)}$ & $0 \leq a < \infty $ \\
                                                &                                                                                                                                     & $0 \leq b < \infty$ \\ 
                                                &                                                                                                                                     &  $0 \leq c < \infty$ \\ \hline
$f_{\tau_{i-1} , \tau_{i} , \tau_{i+1}}({a, b, c})$ & $\dfrac{N^{i+1}}{\Gamma(i-1)} a^{i-2} e^{-N c}$ & $0 \leq a \leq b$ \\
                                              &                                                                                                   & $0 \leq b \leq c$ \\
                                              &                                                                                                   & $0 \leq c < \infty$ \\ \hline
$f_{\tau_{j} , \tau_{j+1} , \tau_{i} , \tau_{i+1}}({a, b, c,d})$ & $\dfrac{N^{i+1}}{\Gamma(j) \Gamma(i-j-1)} a^{j-1} \left(c - b  \right)^{i-j-2} e^{-N d}$ & $0 \leq a \leq b$\\ 
                                                           &                                                                                      & $0 \leq b \leq c$ \\
                                                           &                                                                                      & $0 \leq c \leq d$ \\
                                                           &                                                                                      &  $0 \leq d < \infty$ \\ \hline
\end{tabular}
\caption{Densities under Renewal Process} \label{densities}
\end{table}
%%%%%%%%%%%%%%%%%%%%%%%%%%%

Also we present estimates used to develop some of the computations that have been used throughout the article. 
%\begin{table}[h!]
%\centering
%\begin{tabular}{|c|}
%\hline
%$\left(x+y \right) ^{\alpha} \leq 2^{\alpha - 1} \left(x^{\alpha} + y^{\alpha} \right), \quad \alpha > 0.$\\\hline
%$\left(x+y+z \right) ^{\alpha} \leq 2^{\alpha - 1} x^{\alpha} + 2^{2(\alpha - 1)}  y^{\alpha} + 2^{2(\alpha - 1)} z  , \quad \alpha > 0.$\\
%\hline 
%\end{tabular}
%\caption{Inequalities for convex functions} \label{inequalities}
%\end{table}
\begin{equation} \label{inte1}
\int_{0}^{a} \left( a - x \right) ^{b} x^{c} dx = \dfrac{\Gamma(b+1) \Gamma(c+1)}{\Gamma(b+c+2)} a^{b+c+1}.
\end{equation}
\begin{equation} \label{inte2}
\int_0^\infty x^a e^{-b x} dx = b^{-a-1} \Gamma(a + 1) \quad \mbox{for} \quad Re(b)>0 \quad \mbox{and} \quad  Re(a)>-1
\end{equation}

\section{Appendix: Proof of Lemma \ref{lem1} }\label{AL}
\begin{proof}
In order to prove Lemma \ref{lem1}, we will first analyze the case when $\tau$ satisfies (\ref{io}), that is  we consider the case of the jittered sampling.  By  definition of $D_N$, (\ref{DN}), we have
\begin{eqnarray*}
D_{N} &=& \dfrac{1}{N} \sum_{i=0}^{N-1} \left( \dfrac{i+1}{N} + \nu_{i+1} \right)^{2} \\
&=& \dfrac{1}{N} \sum_{i=0}^{N-1} \dfrac{(i+1)^{2}}{N^{2}} + \dfrac{2}{N} \sum_{i=0}^{N-1} \dfrac{(i+1) \nu_{i+1}}{N} + \dfrac{1}{N} \sum_{i=0}^{N-1} \nu_{i+1}^{2} \\
&=& I_N^1 + I_N^2 + I_N^3.
\end{eqnarray*}
First  we have $I_N^{1} = \dfrac{1}{N^{3}} \sum_{i=0}^{N-1} (i+1)^{2} = \dfrac{2N^{3} + 3N^{2} + N}{6 N^{3}}.$
Then, 
\begin{equation}\label{I1N}
\displaystyle\lim_{N \to \infty} I_N^1 = \frac{1}{3}.
\end{equation}
Now,  $I_{2}^N$ can be written as follows
\begin{align*}
I_{2} &= \sum_{i=0}^{N-1} a_{i,N} \nu_{i+1},
\end{align*}
where $a_{i,N} = \frac{2(i+1)}{N^{2}}$. Since   $\max_{1 \leq i \leq N} \vert a_{i,N} \vert \leq  O \left( \frac{1}{N} \right)$; $\mathbb{E}(\nu_{i})=0$ and $\nu_{i} \leq \frac{1}{2N} $ for all $i=0, \ldots, N-1$, we can apply   Theorem 5 in \cite{dae1987}, to obtain

\begin{align} \label{unif-i2}
I_{2} = \sum_{i=0}^{N-1} a_{i,N} \nu_{i+1} \xrightarrow[N \to \infty]{a.s.}   0.
\end{align}

For the  third term $I_N^3$,we take into account that $\nu_{i+1} \in [-1/2N , 1/2N]$ for all $i=0, \ldots , N-1$. Consequently 
\begin{equation}\label{unif-i3}
\dfrac{1}{N} \sum_{i=0}^{N-1} \nu_{i+1}^{2}   \leq  \dfrac{1}{N} \sum_{i=0}^{N-1} \dfrac{1}{4N^2} = \dfrac{1}{4N^2} \xrightarrow[N \to \infty]{a.s.}   0.
\end{equation}
Finally, by (\ref{I1N}), (\ref{unif-i2}) and (\ref{unif-i3})  the result is achieved. \\
Let us consider $D_N$ with the sampling random times  as in  (\ref{rp}). We have 
\begin{align*}
D_{N} = \dfrac{1}{N} \sum_{i=0}^{N-1} \tau_{i+1}^{2} &= \dfrac{1}{N} \sum_{k=1}^{N} \left( \sum_{i=1}^{k} t_{i} \right)^{2} = \dfrac{1}{N} \sum_{k=1}^{N} \sum_{i=1}^{k} \sum_{j=1}^{k} t_{i} t_{j} \\
%%%%%%%%%%%%%%%%%%%%
&= \dfrac{1}{N} \sum_{i=1}^{N} \sum_{j=1}^{N} \left( N - (i \vee j) + 1 \right) t_{i} t_{j} \\
%%%%%%%%%%%%%%%%%%%%%
&= \dfrac{1}{N} \sum_{i=1}^{N} \left( N-i+1  \right) t_{i}^{2} + \dfrac{1}{N} \sum_{1 \leq i \neq j \leq N} \left( N - (i \vee j ) + 1 \right) t_{i} t_{j}.
\end{align*}
It is important to recall that the random times are not independent, so the previously used techniques are not directly applicable. However, as shown before $D_N$, can be written as a quadratic form depending on the increments $t$, which are independent. We define the following  variables 
\begin{align}
R_{N} &= \mathbb{E} \left[ D_{N} \right] \label{rn}, \\
T_{N} &= \dfrac{1}{N} \sum_{i=1}^{N} \left( N-i+1 \right) \left( t_{i}^{2} - \mathbb{E} \left[ t_{i}^{2} \right] \right) \label{tn}, \\
%%%%%%%%%%%%%%%%%%%%%%%%%
Q_{N} &= \dfrac{1}{N} \sum_{1 \leq i \neq j \leq N} \left( t_{i} \mathbb{E} \left[ t_{j} \right] + \mathbb{E} \left[ t_{i} \right] t_{j} - 2 \mathbb{E} \left[ t_{i} \right] \mathbb{E} \left[ t_{j} \right] \right) \left( N - (i \vee j)+1 \right) \label{qn}, \\
%%%%%%%%%%%%%%%%%%%%%%%%%%%
U_{N} &= \dfrac{1}{N} \sum_{1 \leq i \neq j \leq N} \left( t_{i} t_{j} - t_{i} \mathbb{E} \left[ t_{j} \right] - t_{j} \mathbb{E} \left[ t_{i} \right] + \mathbb{E} \left[ t_{i} \right] \mathbb{E} \left[ t_{j} \right] \right) \left( N - (i \vee j )+1 \right) \label{un}.
\end{align}
Then, $D_N$ can be decompose as follows: $
D_{N} = R_{N} + T_{N} + Q_{N} + U_{N}. $
Now, we will show that $R_{N}$ converges to $1/3$, $T_{N}$, $Q_{N}$ and $U_{N}$ converges to $0$ as $N$ goes to infinity.\\

\noindent \textbf{{Convergence of $R_{N}$ to $1/3$}}. Let us  recall that  for all $i=1, \ldots, N$, we have that  $\mathbb{E} \left[ t_{i} \right] = \frac{1}{N}$ and $\mathbb{E} \left[ t_{i}^{2} \right] = \frac{2}{N^{2}}$. Then
\begin{align*} 
R_{N} = \mathbb{E} \left[ D_{N} \right] &= \dfrac{1}{N} \sum_{i=1}^{N} \sum_{j=1}^{N} \left( N- (i \vee j) + 1 \right) \mathbb{E} \left[ t_{i} t_{j} \right] \\
%%%%%%%%%%%%%%%%%%%%%%%%%%%%%%%%
%&= \dfrac{1}{N} \sum_{i=1}^{N} \left( N - i +1 \right) \dfrac{2}{N^{2}} + \dfrac{2}{N} \sum_{1 \leq i < j \leq N} \left( N-j+1 \right) \dfrac{1}{N^2}\\
%%%%%%%%%%%%%%%%%%%%%%%%%%%%%%%%
&= \dfrac{2}{N^{3}} \sum_{i=1}^{N} \left( N - i +1 \right) + \dfrac{2}{N^{3}} \sum_{1 \leq i < j \leq N} \left( N-j+1 \right) \\
%%%%%%%%%%%%%%%%%%%%%%%%%%%%
&= \dfrac{2}{N^{3}} \sum_{i=1}^{N} i + \dfrac{2}{N^{3}} \sum_{j=2}^{N} \sum_{i=1}^{j-1} \left( N-j+1 \right) \\
%%%%%%%%%%%%%%%%%%%%%%%%%%%%%%%%
&= \dfrac{2}{N^{3}} \dfrac{N(N+1)}{2} + \dfrac{2}{N^{3}} \sum_{j=2}^{N} \left( N-j+1 \right) (j-1) \\ 
%%%%%%%%%%%%%%%%%%%%%%%%%%%%%%%%%%
%&= \dfrac{N(N+1)}{N^{3}} + \dfrac{2}{N^{3}} \sum_{j=1}^{N-1} j(N-j) \\
%%%%%%%%%%%%%%%%%%%%%%%%%%%%%%%%%%
%&= \dfrac{N(N+1)}{N^{3}} + \dfrac{2}{N^{3}} \left( N \sum_{j=1}^{N-1} j - \sum_{j=1}^{N-1} j^{2} \right) \\
%%%%%%%%%%%%%%%%%%%%%%%%%%%%%%%%%%
%&= \dfrac{N(N+1)}{N^{3}} + \dfrac{2}{N^{3}} \left( \dfrac{N(N-1)N}{2} - \dfrac{(N-1)N(2(N-1)+1)}{6} \right)\\
%%%%%%%%%%%%%%%%%%%%%%%%%%%%%%%
%&= \dfrac{N(N+1)}{N^{3}} + \dfrac{N(N-1)}{N^{3}} \left( N - \dfrac{(2(N-1)+1)}{3} \right) \\
%%%%%%%%%%%%%%%%%%%%%%%%%%%%%%
%&= \dfrac{N(N+1)}{N^{3}} + \dfrac{N(N-1)}{N^{3}} \left( \dfrac{3N - 2N +1}{3} \right) \\
%%%%%%%%%%%%%%%%%%%%%%%%%%%%%%
&= \dfrac{N(N+1)}{N^{3}} + \dfrac{N(N-1)(N+1)}{3N^{3}},
\end{align*}
which converges to $1/3$ a.s. as $N \to \infty$.\\

\noindent \textbf{{Almost sure convergence of $T_{N}$ to $0$}}. 
We will prove the  convergence to zero in $L^2$. Then by applying Borel Cantelli lemma we will get  the a.s. convergence to 0. 
\begin{align*}
\mathbb{E} \left[  T_{N}^{2} \right] &= \dfrac{1}{N^{2}} \sum_{i=1}^{N} \sum_{j=1}^{N} \left( N-i+1 \right) \left( N-j+1 \right) \mathbb{E} \left[ \left( t_{i}^{2} - \mathbb{E} \left[ t_{i}^{2} \right] \right) \left( t_{j}^{2} - \mathbb{E} \left[ t_{j}^{2} \right] \right)  \right] \\
%%%%%%%%%%%%%%%%%%%%%%%%%%%%%%
&= \dfrac{1}{N^{2}} \sum_{i=1}^{N} \left( N-i+1 \right)^{2} \mathbb{E} \left[ \left( t_{i}^{2} - \frac{2}{N^{2}} \right)^{2} \right] \\
&+ \dfrac{1}{N^{2}} \sum_{1 \leq i \neq j \leq N} \left( N-i+1 \right) \left( N-j+1 \right) \mathbb{E} \left[ \left( an t_{i}^{2} - \frac{2}{N^{2}} \right)  \left( t_{j}^{2} - \frac{2}{N^{2}} \right) \right] \\
%%%%%%%%%%%%%%%%%%%%%%%%%%%%%%%%
&= \dfrac{1}{N^{2}} \sum_{i=1}^{N} \left( N-i+1 \right)^{2} \left( \dfrac{3 !}{N^{4}} - \left( \frac{2}{N^{2}} \right)^{2} \right) \\
%%%%%%%%%%%%%%%%%%%%%%%%%%%%%%%%%
%&= \dfrac{1}{N^{2}} \sum_{i=1}^{N} \left( N-i+1 \right)^{2} \left( \frac{6-4}{N^{4}} \right) \\
%%%%%%%%%%%%%%%%%%%%%%%%%%%%%%
&= \dfrac{2}{N^{6}} \sum_{i=1}^{N} i^{2} 
%%%%%%%%%%%%%%%%%%%%%%%%%%%%%%%%%
%&= \dfrac{2}{N^{6}} \dfrac{N(N+1)(2N+1)}{6} \\
%%%%%%%%%%%%%%%%%%%%%%%%%%
= \dfrac{2}{6} \dfrac{N(N+1)(2N+1)}{N^{6}},
%%%%%%%%%%%%%%%%%%%%%%%%%%%%%
%&= O \left( \dfrac{1}{3} \dfrac{1}{N^{3}} \right) \xrightarrow[N \to \infty]{}  0
\end{align*}
which converges to zero as $N$ tends  to $\infty$.
Using Borel Cantelli Lemma, we get 
\begin{align*}
\sum_{N=1}^{\infty} \mathbb{P} \left( |T_{N}| > \epsilon \right) & \leq \dfrac{1}{\epsilon^{2}} \sum_{N=1}^{\infty}  \mathbb{E} \left[ |T_{N}|^{2} \right]  
%%%%%%%%%%%%%%%%%%%%%%%%%% \leq \dfrac{1}{ 3 \epsilon^{2}} \sum_{N=1}^{\infty} \
\dfrac{1}{N^{3}} < \infty.
\end{align*}
Then, $T_{N} \xrightarrow[N \to \infty]{a.s.} 0 $.\\

\noindent \textbf{{Almost sure convergence of $Q_{N}$ to $0$}}. \\
Given $Q_N$ in (\ref{qn})  we can write  as a weighted  sum of i.i.d. random variables as follows

\begin{align*}
Q_{N} &= \dfrac{1}{N} \sum_{1 \leq i \neq j \leq N} \left( t_{i} \frac{1}{N} + t_j \frac{1}{N} - \frac{2}{N^{2}} \right) \left(  N - (i \vee j) + 1 \right) \\
%%%%%%%%%%%%%%%%%%%%%%%%%%%%%%%%%%
&= \dfrac{2}{N} \sum_{1 \leq i < j \leq N} \left( t_{i} \frac{1}{N} + \frac{1}{N} t_{j} - \frac{2}{N^{2}} \right)  \left( N-j+1 \right) \\
%%%%%%%%%%%%%%%%%%%%%%%%%%%%%
&= \dfrac{2}{N} \sum_{1 \leq i < j \leq N} \left( \frac{t_{i}}{N} - \frac{1}{N^{2}} \right) \left( N-j+1 \right) + \dfrac{2}{N} \sum_{1 \leq i < j \leq N} \left( \frac{t_{j}}{N} - \frac{1}{N^{2}} \right) \left( N-j+1 \right) \\
%%%%%%%%%%%%%%%%%%%%%%%%%%%%%%%%%%
&= \dfrac{2}{N} \sum_{i=1}^{N-1} \sum_{j=i+1}^{N} \left( \frac{t_{i}}{N} - \frac{1}{N^{2}} \right) \left( N-j+1 \right) + \dfrac{2}{N} \sum_{j=2}^{N} \sum_{i=1}^{j-1} \left( \frac{t_{j}}{N} - \frac{1}{N^{2}} \right) \left( N-j+1 \right) \\
%%%%%%%%%%%%%%%%%%%%%%%%%%%%%%%%%%%
&= \dfrac{2}{N} \sum_{i=1}^{N-1} \left( \frac{t_{i}}{N} - \frac{1}{N^{2}} \right) \left( \sum_{j=1}^{N-i} j \right) + \dfrac{2}{N} \sum_{j=2}^{N} \left( \frac{t_{j}}{N} - \frac{1}{N^{2}} \right) \left( N-j+1 \right) (j-1) \\
%%%%%%%%%%%%%%%%%%%%%%%%%%%%%%%%%%%%%%
&= \dfrac{2}{N} \sum_{i=1}^{N-1} \left( \frac{t_{i}}{N} - \frac{1}{N^{2}} \right) \dfrac{(N-i)(N-i+1)}{2} + \dfrac{2}{N} \sum_{j=2}^{N} \left( \frac{t_{j}}{N} - \frac{1}{N^{2}} \right) \left( N-j+1 \right) (j-1) \\
%%%%%%%%%%%%%%%%%%%%%%%%%%%%%%%%%%%%%%
&= \dfrac{2}{N} \sum_{i=1}^{N} \left( \frac{t_{i}}{N} - \frac{1}{N^{2}} \right) \left[ \dfrac{(N-i)(N-i+1)}{2} + (N-i+1)(i-1) \right] \\
%%%%%%%%%%%%%%%%%%%%%%%%%%%%%%%%%%%%%%
%&= \dfrac{2}{N} \sum_{i=1}^{N} \left( \frac{t_{i}}{N} - \frac{1}{N^{2}} \right) (N-i+1) \left[ \dfrac{(N-i) + 2(i-1)}{2} \right] \\
%%%%%%%%%%%%%%%%%%%%%%%%%%%%%%%%%%%
&= \sum_{i=1}^{N} \left( N t_{i} - 1 \right) \dfrac{(N-i+1)(N+i-2)}{N^{3}}.
\end{align*}
Note that for all $i=1, \ldots , N$  the random variable  $X_{i} = N t_{i}$, has a exponential distribution with parameter $1$. 
%where $t_{i} \sim exp(\lambda = N)$, has a probability density function not depending on the parameter $N$. In fact
%
%\begin{align*}
%\mathbb{P} (X_{i} > x) &= \mathbb{P} (N t_{i} >x) \\
%%%%%%%%%%%%%%%%%%%
%&= \mathbb{P} \left( t_{i} >\frac{x}{N} \right) \\
%%%%%%%%%%%%%%%%%%%%%%%
%&= e^{-x},
%\end{align*}
i.e. $X_{i} \sim exp(\lambda = 1)$. Besides,  $X_i, 1 \le i \le N$ are i.i.d. random variables. Then $Q_N$ can be written as a weighted sum of i.i.d. random variables as
\begin{align*}
Q_{N} &= \sum_{i=1}^{N} \left( X_{i} - \mathbb{E} \left[ X_{i} \right] \right) \dfrac{(N-i+1)(N+i-2)}{N^{3}} 
%%%%%%%%%%%%%%%%%%%%%%
= \sum_{i=1}^{N} \left( X_{i} - \mathbb{E} \left[ X_{i} \right] \right) a_{N,i},
\end{align*}
where $a_{N,i}$ are such that $\max_{1 \leq i \leq N} \vert a_{N,i} \vert \leq O \left( \frac{1}{N} \right)$. Then, by \cite{dae1987} (Theorem 5), we conclude that
\begin{align*}
Q_{N} = \sum_{i=1}^{N} a_{N,i} \left( X_{i} - \mathbb{E} \left[ X_{i} \right] \right) \xrightarrow[N \to \infty]{a.s.} 0.
\end{align*}
\textbf{{Almost sure convergence of $U_{N}$ to $0$}}. Let us consider 
\begin{equation*}
U \left( t_{i}, t_{j} \right) = t_{i} t_{j} - \frac{t_{i} }{N} - \frac{t_{j} }{N} + \frac{1}{N^{2}}. 
\end{equation*}
It follows that $\mathbb{E} \left[ U \left( t_{i}, t_{j} \right)  \right] = 0$ and $\mathbb{E} \left[ U \left( t_{i}, t_{j} \right) U \left( t_{k}, t_{l} \right)  \right] = 0$ for $(i,j) \neq (k,l)$. Therefore  
%non correlation of the random variables $U \left( t_{i} , t_{j} \right)$ for $i \neq j$, implies that

\begin{align*}
Var \left( U_{N} \right) &= \mathbb{E} \left[ U_{N}^{2} \right] = \dfrac{1}{N^{2}} \sum_{1 \leq i \neq j \leq N} \mathbb{E} \left[ \left( t_{i} t_{j} - \frac{t_{i}}{N} - \frac{t_{j}}{N} + \frac{1}{N^{2}} \right)^{2} \right] \left( N - (i \vee j) + 1 \right) ^{2} \\
%%%%%%%%%%%%%%%%%%%%%%%
&= \dfrac{1}{N^{2}} \sum_{1 \leq i \neq j \leq N} \left[ \mathbb{E} \left[ t_{i}^{2} t_{j}^{2} \right] + \frac{\mathbb{E} \left[ t_{i}^{2} \right] }{N^{2}} + \frac{\mathbb{E} \left[ t_{j}^{2} \right]}{N^{2}} + \frac{1}{N^{4}} 
- 2 \frac{\mathbb{E} \left[ t_{i}^{2} t_{j} \right]}{N} - 2 \frac{\mathbb{E} \left[ t_{i} t_{j}^{2} \right]}{N} \right. \\
&+ 2 \frac{\mathbb{E} \left[ t_{i} t_{j} \right]}{N^{2}} + \left. 2 \frac{\mathbb{E} \left[ t_{i} t_{j} \right]}{N^{2}} - 2 \frac{\mathbb{E} \left[ t_{i}  \right]}{N} - 2 \frac{\mathbb{E} \left[t_{j} \right]}{N^{2}} \right] \left( N -(i \vee j) +1 \right)^{2} \\
%%%%%%%%%%%%%%%%%%%%%%%%%%%
%&= \dfrac{1}{N^{2}} \sum_{1 \leq i \neq j \leq N} \left[ \frac{2}{N^{2}} \frac{2}{N^{2}} + \frac{2}{N^{4}} + \frac{2}{N^{4}} + \frac{2}{N^{4}} + \frac{1}{N^{4}} \right. \\
%&- \frac{2}{N} \frac{2}{N^{2}} \frac{1}{N} - \frac{2}{N} \frac{1}{N} \frac{2}{N^{2}} + \frac{2}{N^{2}} \frac{1}{N} \frac{1}{N} \\
%&+ \left. \frac{2}{N^{2}} \frac{1}{N} \frac{1}{N} - \frac{2}{N^{4}} - \frac{2}{N^{4}} \right] \left( N -(i \vee j) +1 \right)^{2} \\
%%%%%%%%%%%%%%%%%%%%%%%%%%%
&= \dfrac{1}{N^{2}} \sum_{1 \leq i \neq j \leq N} \left[ \dfrac{13}{N^{4}} -  \dfrac{12}{N^{4}} \right] \left( N -(i \vee j) +1 \right)^{2} = \dfrac{1}{N^{6}} \sum_{1 \leq i \neq j \leq N} \left( N -(i \vee j) +1 \right)^{2}.
\end{align*}
which is of order $O \left( \frac{1}{N^{2}} \right).$
Using Borell-Cantelli, we have 
\begin{equation*}
\sum_{N=1}^{\infty} \mathbb{P} \left( \vert U_{N} \vert > \epsilon  \right) \leq \frac{1}{\epsilon^{2}} \sum_{N=1}^{\infty} \mathbb{E}\left( \vert U_{N} \vert^2 \right) \leq  \frac{1}{\epsilon^{2}} \sum_{N=1}^{\infty} \frac{1}{N^2} < \infty.
\end{equation*}
Therefore  $U_{N} \xrightarrow[N \to \infty]{a.s.} 0.$
Finally, the almost sure convergence of $D_{N}$ to $1/3$ is a consequence of the almost sure convergence of $T_{N}$, $Q_{N}$ and $U_{N}$ to $0$, and the convergence of $R_{N}$ to $1/3$.

\end{proof}

%%%%%%%%%%%%%%%%%%%%%%%%%%%%%%%%%%%%%%%%%%%%%%%%
%%%%%%%%%%%%%%%%%%%%%%%%%%%%%%%%%%%%%%%%%%%%%%%%
\section*{Acknowledgements}
This  research was partially supported by Project ECOS - CONICYT C15E05, REDES 150038 and MATHAMSUD 18-MATH-03 SaSMoTiDep Project. H\'ector Araya was partially supported by Proyecto FONDECYT Post-Doctorado 3190465, Natalia Bahamonde was partially supported by FONDECYT Grant 1160527, Tania Roa was partially supported by Beca CONICYT-PFCHA / Doctorado Nacional / 2018-21180298, Soledad Torres was partially supported by FONDECYT Grant 1171335.
%%%%%%bibliografia

\bibliographystyle{plain} %frplainnat
%\selectlanguage{english}
%\bibliography{random}

\end{document}